\numberwithin{equation}{section}
\newtheorem{thm}{Theorem}[section]
\newtheorem*{thm*}{Theorem}
\newtheorem{lemma}[thm]{Lemma} 
\newtheorem{prop}[thm]{Proposition} 
\newtheorem{cor}[thm]{Corollary}
\theoremstyle{definition}
\newtheorem{defn}[thm]{Definition}
\theoremstyle{remark}
\newtheorem{remark}[thm]{Remark}
\newcommand\HH{\mathrm{H}}
\newcommand{\OO}{\mathcal{O}}
\DeclareMathOperator{\et}{\acute{e}t}
\DeclareMathOperator{\Gal}{Gal}
\DeclareMathOperator{\GL}{GL}
\DeclareMathOperator{\SL}{SL}
\DeclareMathOperator{\Aut}{Aut}
\DeclareMathOperator{\Stab}{Stab}
\DeclareMathOperator{\Nrd}{Nrd}
\DeclareMathOperator{\Sp}{Sp}
\DeclareMathOperator{\rank}{rank}
\DeclareMathOperator{\Pic}{Pic}
\newcommand{\bC}{{\mathbb C}}
\newcommand{\bF}{{\mathbb F}}
\newcommand{\bG}{{\mathbb G}}
\newcommand{\bP}{{\mathbb P}}
\newcommand{\bQ}{{\mathbb Q}}
\newcommand{\sA}{{\mathcal A}}
\newcommand{\sD}{{\mathcal D}}
\newcommand{\sH}{{\mathcal H}}
\newcommand{\sL}{{\mathcal L}}
\newcommand{\sM}{{\mathcal M}}
\newcommand{\fX}{{\mathfrak X}}
\newcommand{\bZ}{\ensuremath{\mathbb{Z}}}
\def\lowsim{\vbox to 0pt{\vss\hbox{$\scriptstyle\sim$}\vskip-1.5pt}}
\newcommand{\longhookrightarrow}{\xhookrightarrow{\hphantom{aaa}}}
\DeclareRobustCommand\longtwoheadrightarrow
\newcommand{\supth}[1]{\ensuremath{#1^{\mathrm{th}}}}
\newcommand{\supst}[1]{\ensuremath{#1^{\mathrm{st}}}}
\title{Line bundles on the first Drinfeld covering}
\author{James Taylor}
\address{Centre for Mathematical Sciences, University of Cambridge, Cambridge, CB3 0WA, UK}
\email{jt950@cam.ac.uk}
\begin{document}


\maketitle

\begin{prelims}

\DisplayAbstractInEnglish

\bigskip

\DisplayKeyWords

\medskip

\DisplayMSCclass

\end{prelims}


\newpage

\setcounter{tocdepth}{1}

\tableofcontents


\section{Introduction}

Let $p$ be a prime, $F$ a finite extension of $\bQ_p$, and $L$ the completion of the maximal unramified extension of $F$. The \emph{Drinfeld tower} is a system of $d$-dimensional rigid analytic spaces over $L$,
\[
	\sM_0 \longleftarrow \sM_1 \longleftarrow \sM_2 \longleftarrow \cdots,
\]
for which the spaces $\sM_n$ are equipped with compatible actions of $D^\times \times \GL_{d+1}(F)$, where $D$ is the division algebra with invariant $1/(d+1)$ over $F$; see \cite{DRI, BC, RZ}. The space $\sM_0$ is a fundamental example of a Rapoport--Zink space, and as such is defined as the generic fibre of a $p$-adic formal scheme $\widehat{\sM_0}$ which parametrises special formal $\OO_D$-modules. The spaces $\sM_n$ over $\sM_0$ are obtained by considering level structure by the compact open subgroups $1 + \Pi^n \OO_D$ of $D^\times$, and each $\sM_n \rightarrow \sM_0$ is a finite \'{e}tale Galois covering with Galois group $\OO_D^\times / (1 + \Pi^n \OO_D)$. These spaces play an important role in the representation theory of both $\GL_{d+1}(F)$ and $D^\times$: this tower has been shown to realise both the local Langlands and the Jacquet--Langlands correspondences for $\GL_{d+1}(F)$ in its \'{e}tale cohomology, see~\cite{CAR, HARR, BOY, HARTAY}, and when $F = \bQ_p$ and $d = 1$, encode part of the $p$-adic local Langlands correspondence for $\GL_2(\bQ_p)$; see~\cite[Theorem~0.2]{CDN1}.

The base space $\sM_0$ is non-canonically a disjoint union over $\bZ$ of copies of $\Omega^d$, the $d$-dimensional Drinfeld symmetric space. This is the admissible open subset of $\bP^{d, \text{an}}$ defined by removing all $F$-rational hyperplanes. The space $\sM_1$ has been studied by many authors, see \cite{TEIT, WANG, LP, JUNEQN}, and recently Junger has shown that, after extending from $L$ to $L(\varpi)$, the preimage of each copy of $\Omega^d$ in $\sM_1$ is a disjoint union of $q-1$ copies of $\Sigma^1$, a particular geometrically connected Kummer-type cyclic Galois covering of $\Omega^d$; see \cite[Theorem~4.9]{JUNEQN}. Here $\varpi$ is a $\supst{(q-1)}$ root of $-\pi$, for $\pi$ a uniformiser of $F$, and the extension $L(\varpi) / L$ is equal to the first Lubin--Tate extension of $L$. Little is known about the geometry of the higher covering spaces $(\sM_n)_{n \geq 2}$.

It is a classical result that $\Pic(\Omega^1) = 0$, and recently this has been generalised to higher dimensions and more general hyperplane arrangements by Junger; see \cite[Theorem~A]{JUNCOH}. Understanding the Picard groups of the covering spaces $(\sM_n)_{n \geq 1}$ is much more difficult, and almost nothing is known in this context. Previously, we showed that for $d = 1$, there is no $p$-torsion in the Picard group of any open subset of $\Sigma^1$ which is the preimage of a vertex of the Bruhat--Tits tree; see \cite[Theorem~3.2]{JT}.

In this paper we show that in any dimension $\Pic(\Sigma^1)[p] \neq 0$. More precisely, setting $G \coloneqq \SL_{d+1}(\OO_F)$ and writing $\bF$ for the residue field of the unique degree $d+1$ unramified extension of $F$, we prove the following.

\begin{thm*}[Theorem~\ref{mainthm}]
Suppose that $K$ contains $L(\varpi)$ and a primitive $\supth{p}$ root of\, $1$. Then the natural homomorphism
\[
	\widehat{(\bF, +)} \rightarrow \Pic(\Sigma^1)[p]^G
\]
determined by the second Drinfeld covering is injective.
\end{thm*}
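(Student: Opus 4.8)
The plan is to realise the homomorphism as a pushout construction followed by the Kummer boundary map, and then to reduce injectivity to a computation on a special fibre. Write $A \coloneqq (\bF,+)$. The second Drinfeld covering restricts to a finite \'{e}tale $A$-torsor $\pi \colon \Sigma^2 \to \Sigma^1$, since $A \cong (1+\pi_D\OO_D)/(1+\pi_D^2\OO_D)$ is the kernel of $\Gal(\sM_2/\sM_0)\to\Gal(\sM_1/\sM_0)$ and so acts freely and transitively on the fibres over $\Sigma^1$. As the ground field contains $\mu_p$ and $A$ is elementary abelian of exponent $p$, each character $\chi \in \widehat{(\bF,+)} = \Hom(A,\mu_p)$ produces a line bundle $\sL_\chi \coloneqq \Sigma^2 \times^{A,\chi}\bG_m$ with $\sL_\chi^{\otimes p}\cong \OO_{\Sigma^1}$, and $\chi \mapsto [\sL_\chi]$ is the map in question. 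First I would verify that its image lies in $\Pic(\Sigma^1)[p]^G$: the $D^\times$- and $\GL_{d+1}(F)$-actions on the tower commute, so $G=\SL_{d+1}(\OO_F)$ preserves $\pi$ and acts trivially on $A$, whence $g^*\sL_\chi\cong\sL_\chi$ for all $g\in G$.

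Next I would rephrase injectivity cohomologically. Since $\Sigma^1$ is a rigid space over a field of characteristic $0$, the prime $p$ is invertible on it, and the Kummer sequence yields
\[
0 \to \OO(\Sigma^1)^\times/(\OO(\Sigma^1)^\times)^p \to \HH^1_{\et}(\Sigma^1,\mu_p) \to \Pic(\Sigma^1)[p] \to 0.
\]
Under this, $[\sL_\chi]$ is the image of the torsor class $c_\chi \coloneqq \chi_*[\Sigma^2]$, so $\chi$ lies in the kernel precisely when the $\mu_p$-torsor $\chi_*\Sigma^2$ has Kummer type $\{z^p=f\}$ for some global unit $f\in\OO(\Sigma^1)^\times$. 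Thus injectivity is equivalent to the single assertion that for every nontrivial $\chi$ the class $c_\chi$ is \emph{not} represented by a global unit; in particular this entails $c_\chi\neq 0$, i.e. that the torsor $\Sigma^2\to\Sigma^1$ is suitably nonsplit, which I expect to read off from Junger's analysis of connected components. The real content is separating the covering classes from units.

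For that crux I would pass to a suitable formal $\OO_L$-model of $\Sigma^1$ and reduce modulo $p$. The expected payoff is that the special fibre is (a union of) Deligne--Lusztig varieties attached to $\GL_{d+1}(\bF_q)$ over $\bF$, and that the $A$-torsor $\Sigma^2\to\Sigma^1$ specialises to an Artin--Schreier $(\bF,+)$-covering of this special fibre; this is exactly where the additive characters of $\bF$ and the Frobenius structure enter. A global unit $f$ trivialising $\sL_\chi$ would specialise to a unit on the special fibre whose Artin--Schreier class vanishes, so it would suffice to compute the relevant unit and Picard groups of the Deligne--Lusztig variety and to check that the nontrivial additive characters give nonzero, distinct Artin--Schreier classes there. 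The nondegeneracy sought is essentially that the pairing between $\widehat{(\bF,+)}$ and the additive coordinate cut out by the second covering is perfect, which I would hope to reduce, using $\SL_{d+1}(\bF_q)$- and $\bF^\times$-equivariance, to an explicit finite linear-algebra statement over $\bF$.

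The main obstacle is this last step, in two parts. First one must produce a model good enough that specialisation controls both units and $p$-torsion line bundles — that units reduce to units and no $p$-torsion class is lost or created — which requires a careful (semistable) model and a comparison of generic- and special-fibre Picard groups. Second one must actually carry out the Artin--Schreier computation on the Deligne--Lusztig variety and show the classes $c_\chi$ for $\chi\neq 1$ are non-split. Controlling the full unit group $\OO(\Sigma^1)^\times$ directly would be the alternative route, but it seems harder, so the weight of the argument rests on engineering the special-fibre comparison.
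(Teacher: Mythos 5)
Your first half tracks the paper's proof closely and is correct: $\Sigma^2 \to \Sigma^1$ is a Galois covering with group $(\bF,+)$, the classes $[\sL_\chi]$ are $G$-invariant because the $G$-action commutes with the Galois action, and via the Kummer sequence injectivity reduces to showing that for $\chi \neq 1$ the pushed-out $\mu_p$-covering is not of the form $\Sigma^1(f^{1/p})$ for a global unit $f \in \OO(\Sigma^1)^\times$. The gap is at the crux, where you propose to pass to a formal model and reduce modulo $p$. Two things break. First, specialisation does not control $p$-torsion phenomena in mixed characteristic: over the residue field $\mu_p$ is an infinitesimal (connected) group scheme, so a $\mu_p$-torsor on the generic fibre does not specialise to an \'{e}tale $(\bZ/p\bZ)$-torsor on the special fibre; passing between Kummer coverings in characteristic $0$ and Artin--Schreier coverings in characteristic $p$ requires Sekiguchi--Suwa type degeneration theory and strong hypotheses on the model, and likewise neither $\OO(\Sigma^1)^\times/\OO(\Sigma^1)^{\times p}$ nor $\Pic(\Sigma^1)[p]$ is computed by any special fibre (this is $p$-torsion in mixed characteristic, exactly where \'{e}tale specialisation theorems fail), so your requirement that ``no $p$-torsion class is lost or created'' cannot be engineered. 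Second, while the Deligne--Lusztig description of the reduction of $\Sigma^1$ itself is known (Teitelbaum, Wang), the assertion that $\Sigma^2 \to \Sigma^1$ specialises to an Artin--Schreier $(\bF,+)$-covering of that special fibre is an open statement about the geometry of $\sM_2$ --- precisely the kind of information about higher coverings that is unavailable --- so it cannot serve as the engine of the proof.

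The paper avoids all of this by staying in characteristic $0$ and exploiting the $G$-invariance that you noted but did not use at the decisive moment. Since the covering is $G$-equivariant, its class lies in $\HH^1_{\et}(\Sigma^1, \underline{H/H_\chi})^G$, so by left exactness of $(-)^G$ applied to the Kummer sequence, a trivialisation of $\sL_\chi$ would force the covering to come from $\bigl(\OO(\Sigma^1)^\times/\OO(\Sigma^1)^{\times p}\bigr)^G$. The key lemma is that this group is just $K^\times/K^{\times p}$: this follows from Junger's explicit description $\OO(\Omega^d)^\times/K^\times \cong \bZ[[\sH]]^0$, a counting argument showing $(\bZ/p\bZ\,[[\sH]]^0)^G = 0$ (each fibre of $\sH_{m+1} \to \sH_m$ has size $q^d \equiv 0 \bmod p$), and Junger's unit sequence comparing $\OO(\Sigma^1)^\times$ with $\OO(\Omega^d)^\times$. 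A constant $v \in K^\times/K^{\times p}$ would make $\Sigma^2/H_\chi \cong \Sigma^1(v^{1/p})$ geometrically disconnected, and since $\Sigma^2_{\bC_p}$ surjects onto it, this contradicts the geometric connectedness of $\Sigma^2$ given by Boutot--Zink. That computation of $G$-invariant units is exactly the missing lemma your special-fibre detour was meant to substitute for, and it is where the real content of the theorem lives.
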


Here $\widehat{(\bF, +)}$ is the group of characters of $(\bF, +)$. The assumption that $K$ contains $L(\varpi)$ is simply to ensure that the space $\Sigma^1$ is defined, and the assumption that $K$ contains a primitive $\supth{p}$ root of $1$ is to ensure that the homomorphism, which we now describe, is defined. For any Galois covering $f \colon X \rightarrow Y$ with abelian Galois group $H$ of exponent $e$ over a field which contains a primitive $\supth{e}$ root of $1$, there is a decomposition
\[
	f_* \OO_X = \bigoplus_{\chi \in \widehat{H}} \sL_{\chi}, \quad \sL_{\chi} \coloneqq e_{\chi} \cdot f_* \OO_X,
\]
where $e_{\chi}$ is the central primitive idempotent corresponding to the character $\chi$. Furthermore, $\sL_{\chi} \in \Pic(Y)[e]$ for any $\chi \in \widehat{H}$, and the association
\[
	\widehat{H} \longrightarrow \Pic(Y)[e], \quad \chi \longmapsto \sL_{\chi}
\]
is a group homomorphism (\textit{cf.} Proposition~\ref{abeliangaloisprop}). For any $n \geq 1$, $\sigma_n \colon \sM_{n+1} \rightarrow \sM_{n}$ is a Galois covering with Galois group $(1 + \Pi^n \OO_D) / (1 + \Pi^{n+1}\OO_D)$, which is canonically identified with $(\bF, +)$. The homomorphism of Theorem~\ref{mainthm} is then the homomorphism associated to the abelian Galois covering $\sigma_1 \colon \Sigma^2 \rightarrow \Sigma^1$, where $\Sigma^2$ is the preimage of $\Sigma^1$ in $\sM_2$.

Now for the remainder of the introduction we focus on the case where $d = 1$ and set $\Omega \coloneqq \Omega^1$. Our main interest in Theorem~\ref{mainthm} is in the following. The work~\cite{DLB} of Dospinescu and Le Bras describes how certain $D^\times$-isotypical parts of the locally analytic representations $\OO(\sM_n)'$ of $\GL_2(\bQ_p)$ are related to the $p$-adic Langlands correspondence and to the Jacquet--Langlands correspondence. We would like to better understand the representations $\OO(\sM_n)'$ more generally for any $F$ and $d \geq 1$. For the representation $\OO(\sM_1)'$ and questions regarding admissibility and topological irreducibility, because $\sM_1$ is a disjoint union of copies of $\Sigma^1$, it is sufficient to understand the $D(G)$-module $\OO(\Sigma^1)$, where $D(G)$ is the distribution algebra of $G$.

The extension $\sigma_0 \colon \Sigma^1 \rightarrow \Omega$ is an abelian Galois covering with Galois group $\Gamma \coloneqq (\bF_{q^2}^\times)^{q-1} \subset \bF_{q^2}^\times$. Applying the above formalism to this extension and then taking the global sections, we can decompose the $D(G)$-module $\OO(\Sigma^1)$  as 
\[
	\OO(\Sigma^1) = (\sigma_{0, *} \OO_{\Sigma^1})(\Omega)= \bigoplus_{\psi \in \widehat{\Gamma}} \sL_{\psi}(\Omega).
\]
The monumental recent work of Ardakov and Wadsley \cite{AW} shows, without any restriction on $F$, that if $\psi \neq 1$, then $\sL_{\psi}(\Omega)$ is a topologically irreducible coadmissible $D(G)$-module. This follows from~\cite[Theorem~A]{AW} together with the proof of~\cite[Corollary~B]{AW} and the fact that $\Sigma^1$ is geometrically connected. The case where $\psi = 1$ is simply $\sL_{1}(\Omega) = \OO(\Omega)$, and the authors additionally show that this has length~$2$ as a $D(G)$-module; see \cite[Corollary~7.5.9]{AW}. We note that unlike $\sL_{\psi}$ for $\psi \neq 1$, the line bundle $\sL_1 = \OO_{\Omega}$ is the restriction of some $\GL_2(F)$-equivariant vector bundle on $\bP^{1, \text{an}}$ to $\Omega$, and as such was already well understood as a $D(\GL_2(F))$-module; see \cite{ORL}. The main idea of \cite{AW} is to understand the $G$-equivariant line bundle with connection $\sL_{\psi}$ on $\Omega$ and then push this to $\bP^{1,\text{an}}$ and use the $G$-equivariant Beilinson--Bernstein correspondence, see \cite[Theorem~C]{ARD}, to deduce properties of the corresponding $D(G)$-module $\sL_{\psi}(\Omega)$. 

We would like to use similar techniques to understand the global sections of the higher Drinfeld coverings as $D(G)$-modules. For the second covering, we consider the extension $\sigma_1 \colon \Sigma^2 \rightarrow \Sigma^1$ and want to understand the $G$-equivariant line bundles with connection $\sL_{\chi} = e_{\chi} \cdot \sigma_{1,*}\OO_{\Sigma^2}$ for any $\chi \neq 1$. In this context Theorem~\ref{mainthm} says that the underlying line bundle of each $\sL_{\chi}$ is non-trivial. This is in contrast to what happens for $\sigma_0 \colon \Sigma^1 \rightarrow \Omega$, because $\Pic(\Omega) = 0$. The analysis used in~\cite{AW} of $G$-equivariant line bundles with connection works under the assumption that the underlying line bundle is trivial, and Theorem~\ref{mainthm} shows that none of the line bundles $\sL_{\chi}$ fit into this formalism.

In this paper we also show that all vector bundles on $\Omega$ are trivial (see Corollary~\ref{cor2}), which extends and uses the classical result that $\Pic(\Omega) = 0$. The key property we use is that $\OO(\Omega)$ is a Pr\"{u}fer domain, which is unknown to hold in higher dimensions. In the context of the above discussion, this suggests that rather than first understanding $\sL_{\chi} = e_{\chi} \cdot \sigma_{1,*}\OO_{\Sigma^2}$ as a $G$-equivariant line bundle with connection on $\Sigma^1$ and then pushing to $\Omega$, a potentially more feasible approach is to instead analyse $\sigma_{0,*} \sL_{\chi}$ directly as a $G$-equivariant vector bundle with connection on $\Omega$.

\subsection*{Notation} 

Throughout we shall use the following notation:  $F$ is a finite extension of $\bQ_p$, with ring of integers $\OO_F$, uniformiser $\pi$, and residue field $\bF_q$; $K$ is a complete field extension of $F$, $L$ is the completion of the maximal unramified extension of $F$, and $\bC_p$ is the completion of $\overline{F}$. The integer $d \geq 1$ will denote the dimension of the spaces we consider. We set $G = \SL_{d+1}(\OO_F)$ and write $D$ for the division algebra over $F$ of invariant $1/(d+1)$.

\subsection*{Acknowledgments} The author would like to thank Konstantin Ardakov for many useful discussions and both Konstantin Ardakov and Simon Wadsley for making available an early version of their preprint~\cite{AW}. The author would also like to thank the referee for their time and comments, all of which improved the paper.

\section{Abelian Galois coverings}\label{section1}

In this section we describe how the pushforward of the structure sheaf of an abelian Galois covering decomposes into line bundles. The approach we take here is influenced by the work  of Borevi\v{c} for Kummer extensions of rings; see \cite{BOR}.

Let $\Gamma$ be an abstract group. We write $\underline{\Gamma}$ for the corresponding constant rigid analytic group over $K$. Recall that  a (right) action of $\underline{\Gamma}$ on a rigid space $X$ over $K$ is a morphism $a \colon X \times \underline{\Gamma} \rightarrow X$ of rigid spaces over $K$ such that the diagrams
\[\begin{tikzcd}
	{X \times \underline{\Gamma} \times \underline{\Gamma}} & {X \times \underline{\Gamma}} && {X \times \underline{1}} && {X \times \underline{\Gamma}} \\
	{X \times \underline{\Gamma}} & X\rlap{,} &&& X
	\arrow["{a \times p_2}", from=1-1, to=1-2]
	\arrow["{p_X \times m}"', from=1-1, to=2-1]
	\arrow["a"', from=2-1, to=2-2]
	\arrow["a", from=1-2, to=2-2]
	\arrow["{p_X}"', from=1-4, to=2-5]
	\arrow["a", from=1-6, to=2-5]
	\arrow[hook, from=1-4, to=1-6]
\end{tikzcd}\]
commute. If in addition $f \colon X \rightarrow Y$ is a morphism of rigid spaces over $K$, then the action is called equivariant with respect to the trivial action of $\underline{\Gamma}$ on $Y$ if
\[\begin{tikzcd}
	{X \times \underline{\Gamma}} & X \\
	X & Y
	\arrow["{p_X}"', from=1-1, to=2-1]
	\arrow["a", from=1-1, to=1-2]
	\arrow["f"', from=2-1, to=2-2]
	\arrow["f", from=1-2, to=2-2]
\end{tikzcd}\]
commutes.
As for schemes, an action of $\underline{\Gamma}$ on $X$ which is equivariant with respect to the trivial action of $\underline{\Gamma}$ on $Y$ is equivalent to the data of a group homomorphism $\rho \colon \Gamma^{\text{op}} \rightarrow \Aut_Y(X)$, where $\Aut_Y(X)$ is the group of automorphisms of $X$ which respect the morphism $f \colon X \rightarrow Y$.
In this situation the sheaf of $\OO_Y$-modules $f_* \OO_X$ has a (left) action of $\Gamma$,
\[
	\Gamma \longrightarrow \Aut_k\left(f_*\OO_X\right), \quad g \longmapsto \left(\rho(g)^\sharp_{f^{-1}(U)} \colon f_*\OO_X(U) \longrightarrow f_*\OO_X(U)\right)_{U \subset Y},
\]
which is well defined as $\rho(g)^\sharp \colon \OO_X \rightarrow \rho(g)_* \OO_X$ and
\[
\rho(g)^{-1}(f^{-1}(U)) = f^{-1}(U)
\]
for any open $U \subset Y$. Therefore, we can consider the sheaf of $\OO_Y$-modules $(f_* \OO_X)^{\Gamma}$ defined by
\[
	(f_* \OO_X)^{\Gamma}(U) = \OO_X(f^{-1}(U))^{\Gamma}
\]
for any admissible open subset $U$ of $Y$, which is a sheaf because $(-)^{\Gamma}$ preserves products and equalisers. 

\begin{defn}
Suppose that $\Gamma$ is a finite group, $a \colon X \times \underline{\Gamma} \rightarrow X$ is an action of $\underline{\Gamma}$ on $X$, and $f \colon X \rightarrow Y$ is a finite \'{e}tale morphism of rigid spaces over $K$ which is equivariant with respect to the trivial action of $\underline{\Gamma}$ on $Y$. Then $f \colon X \rightarrow Y$ is a \emph{Galois covering with Galois group $\Gamma$} if the natural map $\OO_Y \rightarrow (f_* \OO_X)^{\Gamma}$ is an isomorphism of $\OO_Y$-modules and
\[
	p_X \times a \colon X \times \underline{\Gamma} \longrightarrow X \times_Y X
\]
is an isomorphism of rigid spaces over $K$.
\end{defn}

For the remainder of this section, we assume that $\Gamma$ is a finite abelian group, $f \colon X \rightarrow Y$ is a Galois covering with Galois group $\Gamma$, and $K$ contains a primitive $\supth{e(\Gamma)}$ root of $1$, where $e(\Gamma)$ is the exponent of $\Gamma$. For each $\chi \in \widehat{\Gamma}$, we write $e_{\chi}$ for the corresponding central primitive idempotent
\[
	e_{\chi} = \frac{1}{|\Gamma|} \sum_{\gamma \in \Gamma} \chi(\gamma^{-1}) \gamma \in K[\Gamma].
\]

\begin{defn}
For any $\chi \in \widehat{\Gamma}$, we define the $\OO_Y$-module
\[
	\sL_{\chi} \coloneqq e_{\chi} \cdot f_* \OO_X.
\]
\end{defn}

\begin{prop}\label{abeliangaloisprop}
There is a direct sum decomposition of\, $\OO_Y$-modules
\[
	f_* \OO_X = \bigoplus_{\chi \in \widehat{\Gamma}} \sL_{\chi},
\]
and multiplication in $f_* \OO_X$ induces an isomorphism
\[
	\sL_{\chi} \otimes_{\OO_Y} \sL_{\psi} \stackrel{\lowsim}{\longrightarrow} \sL_{\chi \psi}.
\]
In particular, each $\sL_{\chi}$ is an $e(\Gamma)$-torsion invertible $\OO_Y$-module, and the association
\[
	\widehat{\Gamma} \longrightarrow \Pic(Y)[e(\Gamma)], \quad \chi \longmapsto \sL_{\chi}
\]
is a group homomorphism.
\end{prop}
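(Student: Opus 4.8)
The plan is to deduce everything from the two defining properties of a Galois covering: the isomorphism $\OO_Y \xrightarrow{\sim} (f_*\OO_X)^\Gamma$ and the isomorphism $p_X \times a \colon X \times \underline{\Gamma} \xrightarrow{\sim} X \times_Y X$.

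First I would record the direct sum decomposition. Because $K \subseteq \OO_Y$ and $f_* \OO_X$ carries a left $\Gamma$-action by $\OO_Y$-algebra automorphisms, it is a sheaf of $\OO_Y[\Gamma]$-modules, so the idempotents $e_\chi \in K[\Gamma]$ act on it. Since $\Gamma$ is abelian and $K$ contains a primitive $e(\Gamma)$th root of $1$, the $e_\chi$ form a complete set of orthogonal idempotents ($e_\chi e_\psi = \delta_{\chi\psi} e_\chi$ and $\sum_\chi e_\chi = 1$), which gives $f_*\OO_X = \bigoplus_\chi \sL_\chi$ at once. A local section $s$ then lies in $\sL_\chi$ precisely when $\gamma \cdot s = \chi(\gamma)\,s$ for all $\gamma$, so $\sL_\chi$ is the $\chi$-eigensheaf and $\sL_1 = (f_*\OO_X)^\Gamma = \OO_Y$. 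As each $\rho(\gamma)^\sharp$ is a ring homomorphism, $\gamma \cdot (st) = (\gamma\cdot s)(\gamma \cdot t)$, so for $s \in \sL_\chi$ and $t \in \sL_\psi$ one finds $\gamma \cdot (st) = (\chi\psi)(\gamma)\, st$; hence multiplication gives a well-defined $\OO_Y$-linear map $m_{\chi,\psi} \colon \sL_\chi \otimes_{\OO_Y} \sL_\psi \to \sL_{\chi\psi}$. Each $\sL_\chi$ is coherent, being a direct summand of the coherent sheaf $f_*\OO_X$.

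The heart of the argument is to show that every $m_{\chi,\psi}$ is an isomorphism and that each $\sL_\chi$ is invertible. Since $f$ is finite \'{e}tale with $f_*\OO_X$ locally free of rank $|\Gamma| > 0$, it is faithfully flat, so both statements may be checked after pulling back along $f$. By flat base change along the Cartesian square for $p_1, p_2 \colon X\times_Y X \to X$, I would identify $f^* f_* \OO_X \cong (p_1)_*\OO_{X\times_Y X}$, and then use the Galois condition to rewrite this as $(p_1)_* \OO_{X \times \underline{\Gamma}} \cong \bigoplus_{\gamma \in \Gamma}\OO_X$, carrying the regular representation of $\Gamma$. Decomposing the regular representation over $K$, each character appears exactly once, so $f^*\sL_\chi$ is free of rank $1$ on an explicit eigenvector $v_\chi$ whose $\gamma$-component is $\chi(\gamma)$; as multiplication in $\bigoplus_\gamma \OO_X \cong \prod_\gamma \OO_X$ is componentwise, $v_\chi v_\psi = v_{\chi\psi}$, so $f^* m_{\chi,\psi}$ is an isomorphism. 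Faithfully flat descent then shows $m_{\chi,\psi}$ is an isomorphism and that each $\sL_\chi$, being coherent with locally free rank-$1$ pullback, is invertible.

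Granting this, the remaining assertions are formal. Taking $\psi = \chi^{-1}$ gives $\sL_\chi \otimes \sL_{\chi^{-1}} \cong \sL_1 = \OO_Y$, so $\sL_\chi$ is invertible with inverse $\sL_{\chi^{-1}}$; iterating, $\sL_\chi^{\otimes n} \cong \sL_{\chi^n}$, and since $\chi^{e(\Gamma)} = 1$ we get $\sL_\chi^{\otimes e(\Gamma)} \cong \OO_Y$, i.e.\ $[\sL_\chi] \in \Pic(Y)[e(\Gamma)]$. The family of isomorphisms $m_{\chi,\psi}$ is exactly the statement that $\chi \mapsto [\sL_\chi]$ is a homomorphism $\widehat{\Gamma} \to \Pic(Y)[e(\Gamma)]$. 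I expect the main obstacle to be the isomorphism $m_{\chi,\psi}$: this is the one place where the second Galois axiom $X \times \underline{\Gamma}\cong X\times_Y X$ is indispensable, and care is needed to set up flat base change, to match the eigensheaf decomposition with the regular representation after trivialization over $X$, and to descend the conclusion along the faithfully flat map $f$ in the rigid analytic setting.
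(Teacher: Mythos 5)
Your proof is correct, but it reaches the crucial isomorphism $\sL_\chi \otimes_{\OO_Y} \sL_\psi \cong \sL_{\chi\psi}$ by a genuinely different route than the paper. Both arguments start from the same two ingredients: the idempotent decomposition of the $\OO_Y[\Gamma]$-module $f_*\OO_X$, and the affinoid-local isomorphism $B \otimes_A B \xrightarrow{\sim} B \otimes_K \OO(\Gamma)$ coming from the second Galois axiom. The paper stays downstairs: it interprets that isomorphism as saying $A \to B$ is an $\OO(\Gamma)$-Hopf--Galois extension, invokes Ulbrich's theorem (cited from Montgomery) to conclude that $B = \bigoplus_\chi B_\chi$ is strongly graded, i.e.\ $B_\chi \cdot B_\psi = B_{\chi\psi}$, identifies $B_\chi = e_\chi B$, and then still must prove injectivity of $B_\chi \otimes_A B_\psi \to B_{\chi\psi}$ by a separate argument (flatness of $B_\chi$ plus a surjective-implies-injective rank argument). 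You instead go upstairs: you read the same isomorphism as the flat base-change identification $f^*f_*\OO_X \cong \bigoplus_{\gamma \in \Gamma}\OO_X$ carrying the regular representation, in which each character occurs exactly once, so $f^*\sL_\chi = \OO_X v_\chi$ is visibly free of rank one and $v_\chi v_\psi = v_{\chi\psi}$ makes $f^*m_{\chi,\psi}$ an isomorphism on the nose; faithfully flat descent then delivers surjectivity, injectivity and invertibility in one stroke. Your route buys self-containedness (no Hopf--Galois or strongly-graded-algebra machinery), at the cost of needing descent of isomorphisms and of local freeness along $f$ in the rigid setting, which is unproblematic here since everything is finite over Noetherian affinoids. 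One loose end you should repair: faithful flatness of $f$ requires surjectivity, and your justification (``$f_*\OO_X$ is locally free of rank $|\Gamma| > 0$'') is mildly circular, because the rank computation via pullback along $f$ only sees points in the image of $f$. The first Galois axiom fixes this immediately: $\OO_Y \cong (f_*\OO_X)^\Gamma \hookrightarrow f_*\OO_X$ forces $f_*\OO_X$ to have positive rank everywhere, so $f$, being finite and hence of closed image, is surjective.
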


\begin{proof}
The sheaf $f_*\OO_X$ is an $\OO_Y[\Gamma]$-module, and the direct sum decomposition of $f_*\OO_X$ follows from the fact that the $e_{\chi}$ are central orthogonal idempotents which sum to $1$. Now suppose that $U$ is an affinoid open subset of $Y$, and let
\[
V \coloneqq f^{-1}(U) = U \times_Y X \longhookrightarrow X.
\]
Then $\underline{\Gamma}$ acts on $V$, $f \colon V \rightarrow U$ is equivariant with respect to the trivial action of $\underline{\Gamma}$ on $U$, and we have a commutative diagram of isomorphisms
\[\begin{tikzcd}
	{U \times_Y (X \times \underline{\Gamma})} & {U \times_Y(X \times_Y X)} \\
	{V \times \underline{\Gamma}} & {(U \times_Y X) \times_U ( U \times_Y X)\rlap{.}}
	\arrow[from=1-2, to=2-2]
	\arrow[from=2-1, to=2-2]
	\arrow[from=1-1, to=2-1]
	\arrow[from=1-1, to=1-2]
\end{tikzcd}\]
Write $A \coloneqq \OO(U)$ and $B \coloneqq \OO(V)$. Since $f \colon X \rightarrow Y$ is finite, $V$ is affinoid, and because $\OO_Y \rightarrow (f_* \OO_X)^\Gamma$ is an isomorphism, $A \rightarrow B$ is injective and has image $B^\Gamma$.
Furthermore, because $B$ is finitely generated over $A$, the natural inclusion
\[
	B \otimes_A B \longrightarrow B \widehat{\otimes}_A B
\]
is an isomorphism; see \cite[Proposition~3.7.3/6]{BGR}.
Therefore, the composition of this inclusion with the global sections of $p_X \times a$ induces an isomorphism
\[
	B \otimes_A B \stackrel{\lowsim}{\longrightarrow}  B \otimes_K \OO(\Gamma), \quad x \otimes y \longmapsto \sum_{\gamma \in \Gamma} x(\gamma \cdot y) \otimes \delta_{\gamma}.
\]
Now $B$ is a right $\OO(\Gamma)$-comodule algebra for the Hopf algebra $\OO(\Gamma)$ via
\[
	\rho \colon B \longrightarrow B \otimes_K \OO(\Gamma), \quad \rho \colon b \longmapsto \sum_{\gamma \in \Gamma} \gamma(b) \otimes \delta_\gamma,
\]
and the above isomorphism says exactly that $A \rightarrow B$ is an $\OO(\Gamma)$-Galois extension in the sense of \cite[Definition~8.1.1]{MONT}. Because $K$ contains a primitive $\supth{e(\Gamma)}$ root of $1$, the natural map $K[\widehat{\Gamma}] \rightarrow \OO(\Gamma)$ is an isomorphism of Hopf algebras over $K$. Therefore, using this identification, we can view $B$ as a $K[\widehat{\Gamma}]$-comodule algebra. We have that for $b \in B$,
\begin{equation}\label{comodulemap}
	\rho(b) = \sum_{\gamma \in \Gamma} \gamma(b) \otimes \delta_{\gamma} = \sum_{\chi \in \widehat{\Gamma}} b_{\chi} \otimes \chi 
\end{equation}
for some unique $b_{\chi} \in B$, and for each $\chi \in \widehat{\Gamma}$, we define
\[
B_{\chi} \coloneqq \{b_{\chi} \mid b \in B \}.
\]
Because $A \rightarrow B$ is $K[\widehat{\Gamma}]$-Galois, these $B_{\chi}$ make $B$ a strongly graded $\widehat{\Gamma}$-algebra by a result of Ulbrich, see \cite[Theorem~8.1.7]{MONT}, meaning that 
\[
	B = \bigoplus_{\chi \in \widehat{\Gamma}} B_{\chi} \quad \text{and} \quad B_{\chi} \cdot B_{\psi} = B_{\chi \psi} \ \text{ for all }  \chi, \psi \in \widehat{\Gamma}.
\]
In fact, $e_{\chi} \cdot B = B_{\chi}$. Indeed, by column orthogonality
\[
	\delta_{\gamma} = \frac{1}{|\Gamma|} \sum_{\chi \in \widehat{\Gamma}} \chi(\gamma^{-1}) \chi,
\]
and therefore substituting this into Equation (\ref{comodulemap}) and comparing the coefficient of $\chi$ shows that
\[
	e_{\chi} \cdot b = b_{\chi}.
\]
There is a natural surjective morphism of $A$-modules
\[
	m_{\chi, \psi} \colon B_{\chi} \otimes_A B_{\psi} \longrightarrow B_{\chi} \cdot B_{\psi} = B_{\chi \psi}.
\]
In order to show that this is injective, first note that each $B_{\chi}$ is direct summand of $B$ and thus is finitely generated projective as an $A$-module. From the decomposition of $B$ as the direct sum of the $B_{\chi}$,
\begin{equation}\label{rankequality}
\sum_{\chi \in \widehat{\Gamma}} \rank_A(B_{\chi}) = \rank_A(B) = |\Gamma|.
\end{equation}
On the other hand, we have a surjection
\[
B_{\chi} \otimes_A B_{\chi^{-1}} \longrightarrow B_{\chi} \cdot B_{\chi^{-1}} = B_1 = e_1 \cdot B = A,
\]
and therefore $\rank_A(B_{\chi}) \rank_A(B_{\chi^{-1}}) \geq 1$, hence $\rank_A(B_{\chi}) \geq 1$, and from Equation (\ref{rankequality}) above, $\rank_A(B_{\chi}) = 1$. As a consequence, $m_{\chi, \psi}$ is a surjective homomorphism between finitely generated rank $1$ $A$-modules, and as such it is injective.

Now, returning to the global situation, multiplication induces a morphism of $\OO_Y$-modules 
\begin{equation}\label{OYmodhom}
	\sL_{\chi} \otimes_{\OO_Y} \sL_{\psi} \longrightarrow f_*\OO_Y.
\end{equation}
Because $f_*\OO_Y$ is coherent, then locally over an affinoid open subset $U$ as above, this is identified with the morphism of sheaves associated under the associated sheaf construction to the $A$-module homomorphism
\[
	e_{\chi} \cdot B \otimes_{A} e_{\psi} \cdot B \longrightarrow B.
\]
We have shown above that this has image $e_{\chi \psi} \cdot B$, and therefore the morphism of $\OO_Y$-modules (\ref{OYmodhom}) above induces an isomorphism
\[
	\sL_{\chi} \otimes_{\OO_Y} \sL_{\psi} \stackrel{\lowsim}{\longrightarrow} \sL_{\chi \psi}. \qedhere
\]
\end{proof}

\begin{remark}
In fact one can show that if $Y$ is connected and $\Gamma_0$ is the stabiliser of any connected component $X_0$ of $X$, then $f \colon X_0 \rightarrow Y$ is a Galois extension with Galois group $\Gamma_0$, and the homomorphism
	\[
		\widehat{\Gamma} \longrightarrow \Pic(Y)[e(\Gamma)]
	\]
	factors as the composition
	\[
		\widehat{\Gamma} \longtwoheadrightarrow  \widehat{\Gamma_0} \longrightarrow \Pic(Y)[e(\Gamma_0)] \longhookrightarrow \Pic(Y)[e(\Gamma)].
	\]
\end{remark}

\section{Drinfeld symmetric spaces}

Let $F$ be a finite extension of $\bQ_p$, $L$ the completion of the maximal unramified extension of $F$, and $K$ a complete field extension of $F$. Set $G \coloneqq \SL_{d+1}(\OO_F)$, and let $D$ be the division algebra over $F$ of invariant $1/(d+1)$ with ring of integers $\OO_D$. We let $\Pi$ denote a uniformiser of $\OO_D$ and write $\Nrd \colon D^\times \rightarrow F^\times$ for the reduced norm of $D$. Let $\Omega^d$ be the Drinfeld symmetric space of dimension $d$ over $K$, which is the admissible open subset of $\bP^{d, \text{an}}_K$ defined by removing all $F$-rational hyperplanes. This is stable under the natural action of $\GL_{d+1}(F)$ on $\bP^{d,\text{an}}_K$.

The Drinfeld tower is a system of rigid analytic spaces over $L$,
\[
\sM_0 \longleftarrow \sM_1 \longleftarrow \sM_2 \longleftarrow \cdots,	
\]
and each space has an action of $\GL_{d+1}(F) \times D^\times$ for which the transition morphisms are equivariant. 
For background material on these spaces, see \cite{DRI, BC, RZ}, or \cite[Section~2]{JUNEQN} for an overview. The connected components of the space $\sM_0$ are canonically identified with $\bZ$, and under this identification $(g, \delta) \in \GL_{d+1}(F) \times D^\times$ acts on this set of connected components by addition of $\nu(\det(g)\Nrd(\delta^{-1}))$. In particular, each connected component is stabilised by the subgroup
\[
	\left(\GL_{d+1}(F) \times D^\times\right)^0 \coloneqq \ker\left(\nu\left(\det(-)\Nrd\left((-)^{-1}\right)\right) \colon \GL_{d+1}(F) \times D^\times \longrightarrow \bZ\right).
\]
There is a non-canonical $\GL_{d+1}^0(F) \times \OO_D^\times$-equivariant identification of each connected component with $\Omega_{L}^d$, where 
\[
\GL_{d+1}^0(F) \coloneqq \{g \in \GL_{d+1}(F) \mid \nu(\det(g)) = 0\}
\]
and $\OO_D^\times$ acts trivially on $\Omega_L^d$. Fixing one connected component (which we will from now on identify with $\Omega_{L}^d$ as above), we can consider the preimage of $\Omega_{L}^d$ in each covering space $(\sM_n)_{n \geq 1}$. In this way we obtain a sub-tower of the full Drinfeld tower, which we denote by
\[
	\Omega_{L}^d \longleftarrow \sM_{1}^0 \longleftarrow \sM_{2}^0 \longleftarrow \cdots.
\]
Because $\Omega_{L}^d$ is stable under the action of $\GL_{d+1}^0(F) \times \OO_D^\times$, for each $n \geq 1$, $\sM_n^0 \subset \sM_n$ is $\GL_{d+1}^0(F) \times \OO_D^\times$-stable. The subgroup $1 + \Pi^n \OO_D \leq \OO_D^{\times}$ acts trivially on $\sM_{n}^0$, and the morphism $\sM_{n}^0 \rightarrow \Omega_{L}^d$ is Galois with Galois group $\OO_D^{\times} / (1 + \Pi^n \OO_D)$; see \cite[Theorem~2.2]{KOH}.
Each of the spaces $(\sM_n^0)_{n \geq 1}$ is connected over $L$, see \cite[Theorem~2.5]{KOH}, but not geometrically connected. The following result is due to Boutot and Zink and describes the connected components of $(\sM_n^0)_{n \geq 1}$ over $\bC_p$.

\begin{prop}\label{conncomp}
	There is a family of\, $\GL_{d+1}^0(F) \times \OO_D^\times$-equivariant bijections
	\[
		\pi_0\left(\sM^0_{n, \bC_p}\right)  \stackrel{\lowsim}{\longrightarrow} \frac{\OO_F^\times}{ 1 + \pi^{\lceil\frac{n}{d+1} \rceil}\OO_F}
	\]
	for any $n \geq 1$, compatible with the natural restriction maps on both sides. Here $(g,x) \in \GL_{d+1}^0(F) \times \OO_D^\times$ acts on the right by multiplication by $\det(g)\Nrd(x)^{-1} \in \OO_F^\times$.
	\end{prop}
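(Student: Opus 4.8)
The plan is to reduce the computation to the height-one Lubin--Tate situation by means of a \emph{determinant morphism} of towers, to compute $\pi_0$ of the target by one-dimensional Lubin--Tate theory, and only then to invoke the connectedness of the geometric fibres, which is the genuine content of the Boutot--Zink theorem. First I would construct the determinant morphism. Over $\sM_n^0$ sits the universal deformation $(X,\rho,\eta)$ of the fixed special formal $\OO_D$-module, with $\rho$ a height-zero quasi-isogeny and $\eta$ an $\OO_D$-linear level-$\Pi^n$ structure; since the relevant $\Pi$-adic Tate module is free of rank one over $\OO_D$, such structures form a torsor under $\Delta \coloneqq \OO_D^\times/(1+\Pi^n\OO_D)$, matching the Galois group of $\sM_n^0 \to \Omega^d$. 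Applying the determinant (reduced-norm) construction to $(X,\rho,\eta)$ produces a height-one formal $\OO_F$-module with an induced height-zero quasi-isogeny and a level-$\pi^m$ structure, where $m \coloneqq \lceil n/(d+1)\rceil$, and hence a morphism
\[
	\delta_n \colon \sM_n^0 \longrightarrow \sN_m
\]
to the Lubin--Tate tower $\sN_\bullet$ of the height-one formal $\OO_F$-module, compatible with the transition maps in $n$. The exponent $m$, and in particular the ceiling, is forced by the reduced-norm filtration identity $\Nrd(1 + \Pi^i \OO_D) = 1 + \pi^{\lceil i/(d+1)\rceil}\OO_F$ together with $\nu(\Nrd(\Pi)) = 1$.

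The target is easy to analyse directly. A height-one formal $\OO_F$-module is rigid, so $\sN_0 = \Spf \OO_L$ is a single point and each $\sN_m$ is finite \'etale over it, its $\bC_p$-points being the level-$\pi^m$ structures on a fixed such module, a torsor under $(\OO_F/\pi^m)^\times$. Because $L$ already contains the maximal unramified extension, one-dimensional Lubin--Tate theory shows these points are pairwise distinct over $\bC_p$ and gives a canonical bijection
\[
	\pi_0(\sN_{m,\bC_p}) \xrightarrow{\ \sim\ } \OO_F^\times/(1 + \pi^m \OO_F),
\]
on which $\Gal(\bC_p/L)$ acts through the Lubin--Tate reciprocity map; the height-zero normalisation is exactly what removes the $\bZ$-factor, as on the $\Omega^d$-side.

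The main step, and the principal obstacle, is that $\delta_n$ induces a \emph{bijection} on geometric connected components. Since $\sM_n^0 \to \Omega^d$ is Galois with group $\Delta$ and $\Omega^d_{\bC_p}$ is connected, $\pi_0(\sM^0_{n,\bC_p}) = \Delta/\Delta_0$ for the stabiliser $\Delta_0$ of a component, and $\delta_n$ realises the $\Delta$-equivariant surjection $\Delta/\Delta_0 \twoheadrightarrow \OO_F^\times/(1+\pi^m\OO_F)$ induced by $\Nrd$; surjectivity of the reduced norm gives surjectivity of $\delta_n$, and equivariance forces $\Delta_0$ into the reduced-norm-one subgroup $\ker(\Nrd)$ automatically. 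Injectivity is thus equivalent to the reverse inclusion, i.e.\ to the assertion that the reduced-norm-one deck transformations preserve every geometric component; equivalently, that the geometric fibres of $\delta_n$ — the fixed-determinant coverings of $\Omega^d$ by torsors under $\ker(\Nrd)$ modulo level — are connected. This is precisely the hard input of Boutot--Zink, which I would establish by descending to a formal model over $\OO_L$ and showing that the special fibre of each fixed-determinant covering is irreducible, by a Deligne--Lusztig-type analysis for the reduced-norm-one group over the residue field $\bF = \bF_{q^{d+1}}$. I stress that the connectedness of $\sM_n^0$ over $L$ only yields transitivity of the $\Gal(\bC_p/L)$-action, hence the surjectivity and the matching $\Gal(\bC_p/L)$-equivariance, but not the field-of-definition statement underlying injectivity, so this geometric input cannot be bypassed.

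Finally I would assemble the equivariance and compatibility. The group $\GL_{d+1}^0(F)\times\OO_D^\times$ acts on the determinant datum through $\det$ and $\Nrd$, so $\delta_n$ intertwines the two actions with multiplication by $\det(g)\Nrd(x)$ on $\OO_F^\times/(1+\pi^m\OO_F)$, and the condition $\nu(\det g)=0$ together with the height-zero normalisation keeps both sides in the fixed-height locus. Compatibility of the resulting bijections with the restriction maps in $n$ follows from the functoriality of the determinant construction together with the nesting $1+\pi^{m+1}\OO_F \subseteq 1+\pi^m\OO_F$ dictated by the reduced-norm filtration.
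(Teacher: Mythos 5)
Your proposal does not close the argument: it reduces the proposition to exactly the statement that is its real content, and then leaves that statement unproved. Concretely, your skeleton is sound --- the determinant morphism $\delta_n \colon \sM_n^0 \rightarrow \sN_m$ to the height-one Lubin--Tate tower, the identification $\pi_0(\sN_{m,\bC_p}) \cong \OO_F^\times/(1+\pi^m\OO_F)$, and the observation that equivariance reduces bijectivity to the claim that the reduced-norm-one deck transformations fix every geometric component of $\sM^0_{n,\bC_p}$ --- and you correctly isolate Riehm's identity $\Nrd(1+\Pi^n\OO_D) = 1+\pi^{\lceil n/(d+1)\rceil}\OO_F$ as the source of the ceiling exponent (this is precisely the role it plays in the paper, via the citation of Riehm's Lemma 5). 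But the pivotal connectedness claim is only \emph{named}, not established: ``I would establish \dots by a Deligne--Lusztig-type analysis'' is a plan, and the plan is itself doubtful. For $n \geq 2$ no semistable formal models or special-fibre descriptions of the coverings (let alone of their fixed-determinant quotients) are available --- the paper itself emphasises that little is known about the geometry of $(\sM_n)_{n\geq 2}$ --- so a level-by-level irreducibility argument on special fibres is not something one can carry out with current technology; it is, if anything, harder than the theorem. Boutot and Zink do not argue this way: their proof of the $\pi_0$ description goes through $p$-adic uniformization of Shimura curves, transporting a global computation of connected components (class field theory) to the local tower.

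For comparison, the paper's own proof is a two-line citation: the $\pi_0$ description is \cite[Thm.\ 0.20]{BZ}, and \cite[Lem.\ 5]{RIEHM} converts the normal subgroups $\Nrd^{-1}(1+\pi^m\OO_F)$ appearing there into the stated form $\OO_F^\times/(1+\pi^{\lceil n/(d+1)\rceil}\OO_F)$. So you have, in effect, attempted to reprove the cited Boutot--Zink theorem rather than prove the proposition from available inputs, and the one step you defer is the entirety of that theorem. If you want a self-contained write-up, the honest options are either to cite \cite{BZ} at the point where you invoke connectedness of the fibres of $\delta_n$ (which collapses your proof to the paper's), or to reproduce the uniformization argument --- but not to replace it with an unexecuted special-fibre analysis.
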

	
	\begin{proof}
		This is \cite[Theorem~0.20]{BZ}, noting that $\Nrd(1 + \Pi^n \OO_D) = 1 + \pi^{\lceil\frac{n}{d+1} \rceil}\OO_F$; see \cite[Lemma~5]{RIEHM}.
\end{proof}

In this section we want to give a description of the $G$-invariant mod-$p$ global units of $\Omega^d$ which will use in the next section. Recall that if $R$ is a commutative ring, $\bP^d(R)$ is the set of tuples $(r_0,\ldots,r_d) \in R^{d+1}$ such that $R = Rr_0 + \cdots + Rr_{d}$, up to the scaling action $u\cdot(r_0, \ldots,r_d) = (ur_0, \ldots,ur_d)$ of $R^\times$.

\begin{defn}
	For each $m \geq 1$, let $\sH_m \coloneqq \bP^d(\OO_F / \pi^m \OO_F)$.
\end{defn}

\begin{lemma}\label{actioncor}
For all $m \geq 1$, the action of\, $G$ on $\sH_m$ is transitive.
\end{lemma}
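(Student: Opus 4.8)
The plan is to reduce to a statement about unimodular tuples over the finite local ring $R \coloneqq \OO_F/\pi^m\OO_F$ and to transport any point to the base point $[1:0:\cdots:0]$ using elementary matrices. Since $R$ is local with maximal ideal $\pi R$, a tuple $(r_0,\dots,r_d) \in R^{d+1}$ generates $R$ if and only if at least one $r_i$ is a unit, so $\sH_m = \bP^d(R)$ is exactly the set of such tuples modulo scaling by $R^\times$, and $G$ acts on $\sH_m$ through the reduction $G \to \SL_{d+1}(R)$. Because each elementary matrix over $R$ is the reduction of an elementary matrix over $\OO_F$ lying in $G$, all the operations below are realised by elements of $G$, and I need not separately invoke surjectivity of reduction.

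Given a class $[r_0:\cdots:r_d] \in \sH_m$, I would first arrange for the $0$-th coordinate to be a unit. If $r_0 \in R^\times$ there is nothing to do; otherwise $r_0 \in \pi R$, and choosing $i$ with $r_i \in R^\times$ and applying the elementary matrix that adds the $i$-th coordinate to the $0$-th replaces $r_0$ by $r_0 + r_i \equiv r_i \pmod{\pi}$, which is a unit. Once $r_0 \in R^\times$, applying the transvections $E_{j0}(-r_j r_0^{-1})$ for $j = 1,\dots,d$ successively clears the remaining coordinates, carrying the tuple to $(r_0,0,\dots,0)$; as a point of $\bP^d(R)$ this equals $[1:0:\cdots:0]$ after scaling by $r_0^{-1} \in R^\times$. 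Hence every point lies in the $G$-orbit of the base point, which gives transitivity.

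The only points requiring care are bookkeeping rather than genuine obstacles. One must check that the $G$-action on $\sH_m$ is well-defined, which holds because the reduction of $g \in G$ is invertible over $R$ and so preserves the unimodularity condition and commutes with the scaling action; and one must ensure every matrix used has determinant $1$, which is automatic since each $E_{ij}(a)$ is a transvection. The additive trick in the first step is precisely what lets me avoid permutation matrices (which would only lie in $\GL_{d+1}$), so the whole argument stays inside $\SL_{d+1}$. I do not expect any essential difficulty: the content is merely that over a local ring $\SL_{d+1}$ acts transitively on unimodular rows up to units, and the hypotheses reduce everything to elementary-matrix manipulations.
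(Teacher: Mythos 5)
Your proof is correct, but it takes a genuinely different route from the paper's. The paper first quotes the fact that $G \to \SL_{d+1}(R)$ is surjective (both groups being generated by elementary matrices over a local ring, citing a theorem for this), then proves transitivity of the \emph{full} group $\GL_{d+1}(R)$ by extending a unimodular tuple to a basis of $R^{d+1}$, and finally descends to $\SL_{d+1}(R)$ by a counting argument: the stabiliser of $[1:0:\cdots:0]$ in $\SL_{d+1}(R)$ has index $|R^\times|$ in its $\GL_{d+1}(R)$-stabiliser, matching the index of $\SL_{d+1}(R)$ in $\GL_{d+1}(R)$, so the $\SL$-orbit already has full size. You instead do an explicit row reduction entirely inside the elementary subgroup: use the local-ring structure to make the $0$-th coordinate a unit by a single addition (your trick for avoiding permutation matrices), clear the other coordinates with transvections, and rescale by a unit; lifting each elementary matrix to $G$ is then immediate, with no appeal to the surjectivity theorem. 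Your argument is more self-contained and in fact proves the slightly stronger statement that the subgroup of $\SL_{d+1}(R)$ generated by elementary matrices acts transitively, at the cost of more hands-on bookkeeping; the paper's version is shorter on the page given its citation, and its pattern (transitivity for $\GL$ plus an orbit--stabiliser count) is a reusable template when explicit matrix manipulations are less pleasant.
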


\begin{proof}
	For notational simplicity, set $R \coloneqq \OO_F / \pi^m \OO_F$. The natural map $G \rightarrow \SL_{d+1}(R)$ is surjective because $\OO_F$ and $R$ are local rings, so both groups are generated by elementary matrices; see \cite[Theorem~4.3.9]{HOL}. The action of $\GL_{d+1}(R)$ on $\sH_m$ is transitive because any element $\mathbf{r} = (r_0, \ldots ,r_d)$ with $[\mathbf{r}] \in \sH_m$ can be extended to a basis of $R^{d+1}$, which can be seen by reducing mod-$\pi$. Then the action of $\SL_{d+1}(R)$ on $\sH_m$ is transitive as the stabiliser subgroup of the element $x = [(1 \colon 0 \colon \cdots \colon 0)]$,
	\[
		\Stab_{\SL_{d+1}(R)}(x) \leq \Stab_{\GL_{d+1}(R)}(x), 
	\]
	is of index $|R^\times|$, the same as the index of $\SL_{d+1}(R)$ in $\GL_{d+1}(R)$.
\end{proof}

\begin{defn}
For an abelian group $A$ and $m \geq 1$, we write $A[\sH_m]$ for the abelian group
\[
	A[\sH_m] \coloneqq \left\{f \colon \sH_m \longrightarrow A \right\}
\]
of all functions from $\sH_m$ to $A$, and
\[
	A[\sH_m]^0 \coloneqq \left\{f \colon \sH_m \longrightarrow A \: \middle| \: \sum_{x \in \sH_m} f(x) = 0 \right\} \subset A[\sH].
\]
\end{defn}
For any $m \geq 1$, there is a natural map
\[
	\rho_{m} \colon \sH_{m+1} \longrightarrow \sH_m,
\]
which induces
\[
	\rho_{m, *} \colon A[\sH_{m+1}] \longrightarrow A[\sH_m],
\]
defined by
\[
	\rho_{m, *}(f)(x) = \sum_{y \in \rho_m^{-1}(x)} f(y) 
\]
for all $x \in \sH_m$. This restricts to $\rho_{m, *} \colon A[\sH_{m+1}]^0 \rightarrow A[\sH_m]^0$.
\begin{defn}
	We set
\[
	A[[\sH]]^0 \coloneqq \varprojlim_{m \geq 1} A[\sH_m]^0.
\]
\end{defn}

Because each $A[\sH_m]^0$ is a $\bZ[G]$-module in a compatible way, so is $A[[\sH]]^0$. Taking $A = \bZ$, we have the following description of the global units of $\Omega^d_K$ due to Junger.

\begin{prop}[\textit{cf.} \protect{\cite[Theorem~4.5(2)]{JUNEQN}}]\label{globunitsomega}
There is an isomorphism of\, $\bZ[G]$-modules 
	\[
		\OO(\Omega^d_K)^\times / K^\times  \stackrel{\lowsim}{\longrightarrow} \bZ[[\sH]]^0.
	\]
\end{prop}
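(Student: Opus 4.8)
The plan is to realise the isomorphism concretely through the hyperplane arrangement defining $\Omega^d$, and to prove bijectivity using the quasi-Stein structure of $\Omega^d$ together with a maximum-principle argument on its Bruhat--Tits reduction. First I would unwind the target. Since $\sH_m = \bP^d(\OO_F/\pi^m \OO_F)$ and $\bZ[[\sH]]^0 = \varprojlim_m \bZ[\sH_m]^0$ for the pushforward transition maps $\rho_{m,*}$, an element of $\bZ[[\sH]]^0$ is exactly a $\bZ$-valued measure of total mass $0$ on the profinite set $\bP^d(\OO_F) = \bP^d(F)$. Via projective duality I would identify $\bP^d(F)$ with the set $\check{\bP}^d(F)$ of $F$-rational hyperplanes --- precisely the hyperplanes removed to form $\Omega^d$ --- so that $\sH_m$ becomes the set of such hyperplanes taken modulo congruence of their defining forms mod $\pi^m$. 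The $G = \SL_{d+1}(\OO_F)$-equivariance should then come for free, up to the inverse-transpose automorphism implicit in this duality.

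Second, I would construct the map. For an $F$-rational hyperplane $H$ choose a primitive linear form $\ell_H$; for hyperplanes $H, H'$ the ratio $\ell_H/\ell_{H'}$ is a global unit on $\Omega^d$, giving a $G$-equivariant homomorphism $\bZ[\check{\bP}^d(F)]^0 \to \OO(\Omega^d_K)^\times / K^\times$ on finitely supported degree-$0$ divisors, the mass-$0$ condition being exactly what makes the product $\prod_H \ell_H^{n_H}$ homogeneous of degree $0$, hence a genuine function. In the other direction, using that $\Omega^d$ is quasi-Stein with its standard admissible affinoid exhaustion $\Omega^d = \bigcup_m U_m$ coming from the semistable formal model, I would attach to a unit $f$ its order of vanishing along the boundary divisors of the reduction; collecting these orders over the residue classes of hyperplanes modulo $\pi^m$ should produce a compatible system in $\varprojlim_m \bZ[\sH_m]^0 = \bZ[[\sH]]^0$, with the summation-over-fibres maps $\rho_{m,*}$ matching the geometric refinement of the model. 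These two constructions are mutually inverse on finitely supported data.

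Third, I would prove bijectivity. For injectivity, a unit $f$ whose associated measure vanishes has no zeros or poles recorded along any boundary component of the reduction; a harmonicity/maximum-modulus principle on the Bruhat--Tits building then forces $|f|$ to be constant across the exhaustion, so $f \in K^\times$. For surjectivity, given a mass-$0$ measure I would realise it by a convergent infinite product of linear-form ratios, renormalising within each residue class mod $\pi^m$ so that the factors tend to $1$ uniformly on each $U_m$; the resulting product should be a bounded, nowhere-vanishing global function whose boundary measure is the prescribed one. Density of finitely supported divisors in $\bZ[[\sH]]^0$ then reduces surjectivity onto the full inverse limit to this convergence estimate.

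The main obstacle is precisely this analytic control of the limit: showing that an \emph{arbitrary} element of $\varprojlim_m \bZ[\sH_m]^0$, not merely a finitely supported one, is realised by a product that converges to a unit bounded on all of $\Omega^d$, with uniform estimates over the Stein exhaustion, and that the order-of-vanishing map genuinely lands in and surjects onto the full inverse limit rather than a proper subgroup. This requires a careful understanding of the semistable reduction of $\Omega^d$ --- the components indexed by vertices of the building and how the hyperplane arrangement degenerates on each --- which is exactly the input supplied by Junger's analysis in \cite{JUNEQN}.
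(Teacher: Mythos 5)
You should first be aware that the paper contains no proof of this statement to compare against: Proposition \ref{globunitsomega} is imported wholesale from Junger's work, cited as \cite[Thm.~4.5(2)]{JUNEQN}, and its only role in the paper is as an input (via Corollary \ref{globalinvunitsomega}) to the $p$-torsion argument. So the comparison here is between your sketch and the proof in the cited literature, not anything internal to the paper.

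Your outline follows what is essentially the standard strategy (van der Put for $d=1$, and its higher-dimensional descendants): identify $\bZ[[\sH]]^0$ with $\bZ$-valued distributions of total mass zero on the compact set parametrising $F$-rational hyperplanes, send a finitely supported degree-zero divisor to the unit $\prod_H \ell_H^{n_H}$, and extend to the full inverse limit by an ultrametric convergence argument over the affinoid exhaustion. Your attention to the duality between $\sH_m = \bP^d(\OO_F/\pi^m\OO_F)$ and hyperplane classes, with the inverse-transpose twist on the $G$-action, is also the right thing to worry about; it is harmless for the paper's application, which only uses transitivity of the $G$-action on $\sH_m$. However, as a proof the proposal has a genuine gap, and it is the one you name yourself: every step that makes the statement a theorem is deferred. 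Concretely, you do not prove (i) that the telescoping products converge to \emph{units}, uniformly on each member of the exhaustion --- this needs an estimate of the shape $|\ell_H/\ell_{H'} - 1| \leq |\pi|^{m-n}$ on $U_n$ for unimodular forms with $\ell_H \equiv \ell_{H'} \bmod \pi^m$, together with the fact that a uniform limit of units with factors tending to $1$ is again a unit; (ii) that the order-of-vanishing (boundary distribution) map is well defined on all of $\OO(\Omega^d_K)^\times$, is a group homomorphism landing in the inverse limit, and is inverse to the product construction; and (iii) injectivity, i.e.\ that a unit with vanishing boundary distribution lies in $K^\times$, which requires either the theory of integer-valued harmonic cochains on the Bruhat--Tits building or a direct computation of the unit group of each affinoid $U_m$ at finite level followed by passage to $\varprojlim_m$. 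These are not routine verifications; they are the substance of \cite[Thm.~4.5(2)]{JUNEQN}, as you acknowledge in your final paragraph. What you have written is an accurate and well-organised roadmap to the known proof, but not a proof.
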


For any $m \geq 1$,
\[
|\sH_m| = q^{(m-1)d} (q^{d+1} - 1) / (q-1),
\]
and the restriction map
\[
	\rho_{m} \colon \sH_{m+1} \longrightarrow \sH_m,
\]
is surjective with each fibre of size $q^d$.

In the proof of the next lemma, we will make use of the following element.

\begin{defn}
For each $m \geq 1$, let $\Theta_m \in \bZ / p \bZ \, [\sH_m]$ be defined by
\[
	\Theta_m(x) = 1 
\]
for all $x \in \sH_m$.
\end{defn}

\begin{lemma}\label{nomodpGinvariants}
We have $(\bZ / p \bZ \, [[\sH]]^{0})^G = 0$.
\end{lemma}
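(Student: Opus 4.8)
The plan is to compute the invariants at each finite level and then pass to the inverse limit, exploiting the fact that $(-)^G$ commutes with inverse limits. Since the $G$-action on $\bZ/p\bZ[[\sH]]^0 = \varprojlim_m \bZ/p\bZ[\sH_m]^0$ is induced compatibly from the actions on the finite levels, and the projection maps to each $\bZ/p\bZ[\sH_m]^0$ are $G$-equivariant, any $G$-invariant element of the limit has $G$-invariant image at every finite level. Thus I would first identify $(\bZ/p\bZ[\sH_m]^0)^G$ for each $m$, and then understand how the transition maps $\rho_{m,*}$ behave on these invariant submodules.

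The first step is to invoke Lemma \ref{actioncor}: because $G$ acts transitively on $\sH_m$, every $G$-invariant function $\sH_m \to \bZ/p\bZ$ is constant, i.e.\ a scalar multiple of $\Theta_m$. Hence $(\bZ/p\bZ[\sH_m])^G = \bZ/p\bZ \cdot \Theta_m$, and imposing the sum-zero condition cuts this down to $\bZ/p\bZ \cdot \Theta_m$ when $|\sH_m| \equiv 0 \pmod p$ and to $0$ otherwise. (In fact $|\sH_1| \equiv 1 \pmod p$, so the level-one invariants already vanish, but this observation is not even needed.)

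The key computation is that the transition maps annihilate the invariants. Since each fibre of $\rho_m \colon \sH_{m+1} \to \sH_m$ has size $q^d$, we have $\rho_{m,*}(\Theta_{m+1}) = q^d \, \Theta_m$, and as $q$ is a power of $p$ and $d \geq 1$ this is $0$ in $\bZ/p\bZ[\sH_m]$. Therefore the induced map $\bZ/p\bZ \cdot \Theta_{m+1} \to \bZ/p\bZ \cdot \Theta_m$ on invariant submodules is zero for every $m$. Any element of $(\bZ/p\bZ[[\sH]]^0)^G$ is a compatible system $(f_m)$ of invariants satisfying $f_m = \rho_{m,*}(f_{m+1})$, and since each such transition map vanishes we get $f_m = 0$ for all $m$. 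This gives $(\bZ/p\bZ[[\sH]]^0)^G = 0$.

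I do not expect a genuine obstacle here: once transitivity collapses the invariants onto the line spanned by $\Theta_m$, the whole statement follows from the single divisibility $p \mid q^d$ propagated through the pushforward maps. The only point meriting care is the interchange of $(-)^G$ with the inverse limit, and the resulting reduction of $G$-invariance at the limit to $G$-invariance at each finite level, both of which are immediate from the $G$-equivariance of the structure maps.
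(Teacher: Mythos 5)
Your proposal is correct and follows essentially the same route as the paper's proof: project a $G$-invariant element to each finite level, use the transitivity of the $G$-action on $\sH_{m+1}$ (Lemma \ref{actioncor}) to see the image is a scalar multiple of $\Theta_{m+1}$, and then observe that the pushforward $\rho_{m,*}$ multiplies this by the fibre size $q^d \equiv 0 \pmod p$, forcing every component to vanish. The extra remarks about the sum-zero constraint and $|\sH_1| \equiv 1 \pmod p$ are correct but, as you note, not needed.
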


\begin{proof}
	For any $m \geq 1$, we have projection maps 
	\[
		\phi_{m} \colon (\bZ / p \bZ \, [[\sH]]^0)^G \longrightarrow (\bZ / p \bZ  \, [\sH_m]^0)^G.
	\]
	Suppose that we have some $G$-invariant function, $f \in (\bZ / p \bZ \, [[\sH]]^0)^G$.
	Then for any $m \geq 1$, because $\sH_{m+1}$ is a finite set with a transitive action of $G$ (by Lemma~\ref{actioncor}),
	\[
	\phi_{m+1}(f) = \lambda \Theta_{m+1}
	\]
	for some $\lambda \in \bZ / p \bZ$. Now,
	\[
		\rho_{m} \colon \sH_{m+1} \longrightarrow \sH_m
	\]
	is surjective with each fibre of size $q^d$; hence $\phi_{m}(f) = q^d \lambda \Theta_{m} = 0$ as $p \mid q$. Therefore, $\phi_{m}(f) = 0$ for all $m \geq 1$, and hence $f = 0$.
\end{proof}

We can now use Lemma~\ref{nomodpGinvariants} to prove the main technical result of this section.

\begin{cor}\label{globalinvunitsomega}
The inclusion $K^\times \rightarrow \OO(\Omega^d)^\times$ induces an isomorphism
\[
	K^\times / K^{ \times p} \stackrel{\lowsim}{\longrightarrow} \left(\OO(\Omega^d)^\times / \OO(\Omega^d)^{ \times p} \right)^G.
\]
\end{cor}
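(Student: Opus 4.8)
The plan is to combine Junger's description of the global units (Proposition \ref{globunitsomega}) with the vanishing of $G$-invariants (Lemma \ref{nomodpGinvariants}) through a reduction-modulo-$p$ argument. Write $U \coloneqq \OO(\Omega^d)^\times$. Proposition \ref{globunitsomega} provides a short exact sequence of $\bZ[G]$-modules
\[
	1 \to K^\times \to U \to \bZ[[\sH]]^0 \to 0,
\]
in which $G$ acts trivially on the constants $K^\times$ (the action is by $K$-algebra automorphisms). The first step is to reduce this sequence modulo $p$. Since $\bZ[[\sH]]^0 = \varprojlim_m \bZ[\sH_m]^0$ is an inverse limit of free abelian groups it is torsion-free, so the $p$-torsion of the right-hand term vanishes. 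Applying the snake lemma to multiplication by $p$ on the three terms (the $p$-th power map on the multiplicative groups) therefore yields a short exact sequence of $\bF_p[G]$-modules
\[
	0 \to K^\times / K^{\times p} \to U/U^p \to \bZ[[\sH]]^0 / p\bZ[[\sH]]^0 \to 0.
\]

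The second step is to identify the right-hand term with the mod-$p$ object of Lemma \ref{nomodpGinvariants}. At each finite level, the sequence $0 \to \bZ[\sH_m]^0 \to \bZ[\sH_m] \to \bZ \to 0$ splits (as $\bZ$ is free), and tensoring with $\bZ/p\bZ$ gives $\bZ[\sH_m]^0/p \cong \bZ/p\bZ\,[\sH_m]^0$. The transition maps $\rho_{m,*}$ are surjective on $\bZ[\sH_m]^0$: a preimage of any $g$ is obtained by concentrating its mass along a section of $\rho_m$, which again has total sum zero. Hence the system $(\bZ[\sH_m]^0)_m$ is Mittag--Leffler, so $\varprojlim^1 \bZ[\sH_m]^0 = 0$, and applying $\varprojlim$ to the short exact sequences $0 \to \bZ[\sH_m]^0 \xrightarrow{p} \bZ[\sH_m]^0 \to \bZ/p\bZ\,[\sH_m]^0 \to 0$ produces a natural isomorphism of $\bZ[G]$-modules
\[
	\bZ[[\sH]]^0 / p\bZ[[\sH]]^0 \xrightarrow{\sim} \bZ/p\bZ\,[[\sH]]^0.
\]

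The final step is to take $G$-invariants. Left-exactness of $(-)^G$ applied to the sequence from the first step gives
\[
	0 \to (K^\times/K^{\times p})^G \to (U/U^p)^G \to (\bZ[[\sH]]^0/p\bZ[[\sH]]^0)^G.
\]
The left term equals $K^\times/K^{\times p}$ since $G$ acts trivially there, while the right term vanishes by the second step together with Lemma \ref{nomodpGinvariants}. Consequently the inclusion-induced map $K^\times/K^{\times p} \to (U/U^p)^G$ is an isomorphism, which is the assertion of the corollary. I expect the main obstacle to be the second step: Lemma \ref{nomodpGinvariants} concerns $\bZ/p\bZ\,[[\sH]]^0$, whereas the snake lemma naturally produces $\bZ[[\sH]]^0/p\bZ[[\sH]]^0$, and these agree only because the relevant $\varprojlim^1$ vanishes. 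Verifying the Mittag--Leffler condition — the surjectivity of the transition maps on the degree-zero subgroups — is the one place where the combinatorial structure of the $\sH_m$ genuinely enters.
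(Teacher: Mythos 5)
Your proof is correct and takes essentially the same route as the paper: the same short exact sequence from Proposition \ref{globunitsomega}, the same mod-$p$ reduction using torsion-freeness of $\bZ[[\sH]]^0$, the same identification $\bZ[[\sH]]^0/p\bZ[[\sH]]^0 \cong \bZ/p\bZ\,[[\sH]]^0$ via vanishing of $\varprojlim^1$ for surjective transition maps, and the same conclusion by taking $G$-invariants and invoking Lemma \ref{nomodpGinvariants}. Your extra justifications (the splitting $\bZ[\sH_m] \cong \bZ[\sH_m]^0 \oplus \bZ$ at finite level and the section argument for surjectivity of $\rho_{m,*}$ on degree-zero subgroups) are correct details that the paper leaves implicit.
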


\begin{proof}
	We have a short exact sequence of $\bZ[G]$-modules,
	\[
		0 \rightarrow K^\times \longrightarrow \OO(\Omega^d)^\times \rightarrow \bZ[[\sH]]^0 \rightarrow 0,
	\]
	and applying $- \otimes \bZ / p \bZ$, we obtain an exact sequence of abelian groups
	\begin{equation}\label{exactseq1}
		\bZ[[\sH]]^0[p] \longrightarrow K^\times / K^{\times p} \longrightarrow \OO(\Omega^d)^\times / \OO(\Omega^d)^{ \times p} \longrightarrow \frac{\bZ[[\sH]]^0}{p \bZ [[\sH]]^0} \longrightarrow 0.
	\end{equation}
	Because $p$-torsion commutes with taking the inverse limit,
	\[
		\bZ[[\sH]]^0[p] = \varprojlim_{m \geq 1} \bZ[\sH_m]^0[p] = 0.
	\]
	Furthermore, we have an exact sequence of inverse systems
	\[
		0 \longrightarrow (\bZ[\sH_m]^0)_{m \geq 1} \xrightarrow{\times p} (\bZ[\sH_m]^0)_{m \geq 1} \longrightarrow (\bZ / p \bZ \,[\sH_m]^0)_{m \geq 1} \longrightarrow 0,
	\]
	and
	\[
		\varprojlim_{m \geq 1}{}^1 \bZ[\sH_m]^0 = 0 
	\]
	because each transition map is surjective; thus the natural map
	\[
		\frac{\bZ[[\sH]]^0}{p \bZ [[\sH]]^0}  \stackrel{\lowsim}{\longrightarrow} \bZ / p \bZ \, [[\sH]]^0
	\]
	is an isomorphism. Therefore, taking the $G$-invariants of the exact sequence (\ref{exactseq1}) above, we obtain 
	\[
		0 \longrightarrow K^\times / K^{ \times p} \longrightarrow \left(\OO(\Omega^d)^\times / \OO(\Omega^d)^{ \times p} \right)^G \longrightarrow (\bZ / p \bZ \, [[\sH]]^0)^G.
	\]
	Then the conclusion follows by Lemma~\ref{nomodpGinvariants}.
\end{proof}

\section{Line bundles on the first Drinfeld covering}

Recall that we write $L$ for the completion of the maximal unramified extension of $F$ and that $K$ is a complete field extension of $F$. Let $\varpi \in \overline{F}$ be a primitive $\supst{(q-1)}$ root of $-\pi$. In this section we assume that $K$ contains $L(\varpi)$. The extension $L(\varpi)$ is the first Lubin--Tate extension of $L$, and as such is independent of the choice of $\pi$; see \cite[Theorem~3]{LT}. 

We are interested in the space $\sM_1$, which admits the following explicit description due to Junger.
\begin{defn}
	If $X$ is a rigid space over $K$, then for any $n\geq 1$, the \emph{Kummer map}
	\[
	\kappa \colon \OO(X)^\times \longrightarrow \HH^1_{\text{\'{e}t}}(X, \mu_n)
	\]
	sends $u \in \OO(X)^\times$ to
	\[
		X\left(u^{\frac{1}{n}}\right) \coloneqq \underline{\text{Sp}}_{X}(\OO_X[z] / z^n - u).
	\]
\end{defn}
Let $N \coloneqq q^{d+1} -1$ and $N' \coloneqq N / (q-1)$. In \cite[Theorem~4.9]{JUNEQN} it is shown that
\[
	\sM_1^0 \cong \Omega_L^d \left(\left(\pi u^{q-1} \right)^{\frac{1}{N}}\right) 
\]
for some particular $u \in \OO(\Omega^d)^\times$. Note that because $L$ contains all coprime to $p$ roots of $1$, $L(\varpi)$ contains a primitive $\supst{(q-1)}$ root $\tau$ of $\pi$. Therefore, over $L(\varpi)$,
\[
	\sM_{1, L(\varpi)}^0 \cong \Omega^d_{L(\varpi)} \left(\left((\tau u)^{q-1} \right)^{\frac{1}{N}}\right) \cong 
	\bigsqcup_{\zeta^{q-1} = 1} \Sigma^1_{\zeta},
\]
where
\[
	\Sigma^1_{\zeta} \coloneqq \Omega^d_{L(\varpi)}\left(\left( \zeta \tau u \right)^{\frac{1}{N'}}\right).
\]	

\begin{defn}
We let $\Sigma^1 \coloneqq \Sigma^1_{1, K}$ and let $\Sigma^2$ be the preimage of $\Sigma^1$ in $\sM_{2, K}^0$.
\end{defn}

We recall that a rigid space $X$ over a non-archimedean field $k$ is called \emph{geometrically connected} if for any finite extension $k'$ of $k$, the base change $X \times_k k'$ is connected.

\begin{cor}\label{geomconn}
The rigid spaces $\Sigma^1$ and $\Sigma^2$ are geometrically connected.
\end{cor}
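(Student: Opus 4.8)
The plan is to deduce both statements from the Boutot--Zink description of $\pi_0$ in Proposition \ref{conncomp} by a counting argument, using that geometric connectedness of a rigid space $X$ over $K$ is the assertion that $X_{\bC_p} \coloneqq X \times_K \bC_p$ is connected, equivalently that $|\pi_0(X_{\bC_p})| = 1$. The whole point will be that the two ceiling exponents $\lceil \tfrac{1}{d+1}\rceil$ and $\lceil \tfrac{2}{d+1}\rceil$ both equal $1$, so that levels $1$ and $2$ of the tower have the same, and comparably organised, set of geometric components.

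First I would treat $\Sigma^1$. Since $d \geq 1$ we have $d+1 \geq 2$, so $\lceil \tfrac{1}{d+1}\rceil = 1$ and Proposition \ref{conncomp} with $n=1$ gives
\[
|\pi_0(\sM^0_{1,\bC_p})| = |\OO_F^\times / (1 + \pi\OO_F)| = |\bF_q^\times| = q-1.
\]
On the other hand, over $L(\varpi) \subseteq K$ we have the decomposition $\sM^0_{1,L(\varpi)} \cong \bigsqcup_{\zeta^{q-1}=1} \Sigma^1_\zeta$ into $q-1$ nonempty pieces. Base change commutes with disjoint unions, so $\sM^0_{1,\bC_p} \cong \bigsqcup_\zeta (\Sigma^1_\zeta)_{\bC_p}$, and each nonempty summand contributes at least one connected component. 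As the total number of components is exactly $q-1$, each $(\Sigma^1_\zeta)_{\bC_p}$ must be connected. In particular $\Sigma^1 = \Sigma^1_{1,K}$, whose base change to $\bC_p$ agrees with $(\Sigma^1_1)_{\bC_p}$, is geometrically connected.

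For $\Sigma^2$ I would compare the two levels. Because $d+1 \geq 2$ we also have $\lceil \tfrac{2}{d+1}\rceil = 1$, so Proposition \ref{conncomp} gives the same target $\OO_F^\times/(1+\pi\OO_F) = \bF_q^\times$ at level $2$, with bijections compatible with the restriction maps on both sides. Since the transition on the right is then the identity of $\bF_q^\times$, the induced map
\[
\pi_0(\sM^0_{2,\bC_p}) \to \pi_0(\sM^0_{1,\bC_p})
\]
is a bijection. By definition $\Sigma^2$ is the preimage of $\Sigma^1$ in $\sM^0_{2,K}$, so $(\Sigma^2)_{\bC_p}$ is the preimage of the single connected component $(\Sigma^1)_{\bC_p}$ under $\sM^0_{2,\bC_p} \to \sM^0_{1,\bC_p}$. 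A preimage of a connected component is open and closed, hence a union of connected components, and their number equals the cardinality of the fibre of the above $\pi_0$-map over the class of $(\Sigma^1)_{\bC_p}$; as that map is a bijection, this fibre is a single point. Therefore $(\Sigma^2)_{\bC_p}$ is connected and $\Sigma^2$ is geometrically connected.

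The only genuinely delicate point is the bookkeeping in the two counting steps: one must know that the $\Sigma^1_\zeta$ are all nonempty, so that "at least one component each" combined with the exact count $q-1$ forces connectedness, and that the Boutot--Zink transition map really is the identity of $\bF_q^\times$ rather than a nontrivial automorphism. This is exactly where the compatibility with restriction asserted in Proposition \ref{conncomp}, together with the coincidence $\lceil \tfrac{1}{d+1}\rceil = \lceil \tfrac{2}{d+1}\rceil = 1$, is essential; everything else is formal manipulation with $\pi_0$ and base change.
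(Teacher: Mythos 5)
Your proof is correct and is exactly the argument the paper intends: the corollary is stated as an immediate consequence of Proposition \ref{conncomp} together with the coincidence $\lceil \tfrac{1}{d+1}\rceil = 1 = \lceil \tfrac{2}{d+1}\rceil$, with no further details given. Your write-up simply makes explicit the two counting steps (the $q-1$ Kummer pieces $\Sigma^1_\zeta$ matching the $q-1$ geometric components at level $1$, and the bijectivity of $\pi_0(\sM^0_{2,\bC_p}) \to \pi_0(\sM^0_{1,\bC_p})$ forcing connectedness of the preimage $\Sigma^2$), which is also what the remark following the corollary records when it identifies the $\Sigma^1_\zeta$ as the geometrically connected components of $\sM^0_{1,L(\varpi)}$.
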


\begin{proof}
Let $\Sigma^2_{1}$ be the preimage of $\Sigma^1_1$ in $\sM_{2, L(\varpi)}^0$, and let $r \in \{1,2\}$. The base change $\Sigma^r_{1, \bC_p}$ is connected by Proposition~\ref{conncomp}, noting that $\lceil\frac{r}{d+1} \rceil = 1$ because $d \geq 1$. In particular, for any finite extension $k$ of $L(\varpi)$, the base change $\Sigma^r_{1} \times_{L(\varpi)} k$ is connected, and thus $\Sigma^r_{1}$ is geometrically connected. Now $\Sigma^r_{1}$ is quasi-Stein and hence quasi-separated by \cite[Proposition~9.6.1(7)]{BGR}, and therefore by the discussion after the proof of \cite[Theorem~3.2.1]{CON}, the base change $\Sigma^r = \Sigma^r_{1} \times_{L(\varpi)} K$ is also geometrically connected.
\end{proof}

\begin{remark}
	The proof of Corollary~\ref{geomconn} shows that the $\Sigma^1_{\zeta}$ are the geometrically connected components of $\sM_{1, L(\varpi)}^0$. We note that these components are all isomorphic, as 
	\[
	\Nrd \colon \OO_D^\times / (1 + \Pi \OO_D) \longrightarrow \OO_F^\times / (1 + \pi \OO_F)
	\]
	is surjective, so the Galois group of $\sM_{1, L(\varpi)}^0 \rightarrow \Omega^d_{L(\varpi)}$ acts transitively on these components.
\end{remark}

The extension
\[
	\sM_{2,K} \longrightarrow \sM_{1, K}
\]
is Galois with Galois group
\[
H \coloneqq (1 + \Pi \OO_D) / (1 + \Pi^2 \OO_D).
\]
The extension $\Sigma^2 \rightarrow \Sigma^1$ is the restriction of this Galois covering to the open subset $\Sigma^1$ of $\sM_{1, K}$ and therefore is also Galois with Galois group $H$. From Proposition~\ref{conncomp}, we note that $\GL_{d+1}^0(F)$ acts through the determinant on the geometrically connected components of the tower, and thus $G$ stabilises both $\Sigma^1$ and $\Sigma^2$. Furthermore, the action of $G$ on both $\Sigma^1$ and $\Sigma^2$ commutes with the Galois action.

\begin{prop}\label{globalinvunitssigma}
The inclusion $K^\times \rightarrow \OO(\Omega^d)^\times$ induces an isomorphism
	\[
		K^\times / K^{ \times p}\stackrel{\lowsim}{\longrightarrow} \left(\OO(\Sigma^1)^\times / \OO(\Sigma^1)^{ \times p} \right)^G.
	\]
\end{prop}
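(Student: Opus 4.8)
The plan is to exploit the abelian Galois covering $\sigma_0 \colon \Sigma^1 \to \Omega$ and reduce everything to Corollary \ref{globalinvunitsomega}. The covering $\sigma_0$ is the restriction to the geometrically connected component $\Sigma^1$ of the Galois covering $\sM_{1,L(\varpi)}^0 \to \Omega$, whose group is $\OO_D^\times/(1+\Pi\OO_D) \cong \bF_{q^{d+1}}^\times$; as in the Remark it descends to a Galois covering of $\Sigma^1$ with group $\Gamma_0$ the stabiliser of the component, which is cyclic of order $N' = (q^{d+1}-1)/(q-1)$. The decisive arithmetic point, already visible in the computation $N'|(q^{d+1}-1)$, is that $N'$ is prime to $p$. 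Since the $G$-action commutes with the Galois action, the $\bF_p$-vector space $V := \OO(\Sigma^1)^\times/\OO(\Sigma^1)^{\times p}$ is a module over $\bZ[G\times\Gamma_0]$, and the strategy is to compute $V^G$ by first descending the $\Gamma_0$-action to $\Omega$ and then quoting the known answer there.

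The key technical input is that for the prime-to-$p$ group $\Gamma_0$, taking invariants commutes with reduction mod $p$ of the unit group. Writing $M := \OO(\Sigma^1)^\times$, the Galois property gives $M^{\Gamma_0} = \OO(\Omega)^\times$, and I would show that the natural map
\[
	\OO(\Omega)^\times/\OO(\Omega)^{\times p} \longrightarrow V^{\Gamma_0}
\]
is an isomorphism. This follows from the long exact cohomology sequences attached to $0 \to pM \to M \to M/pM \to 0$ and $0 \to M[p] \to M \xrightarrow{p} pM \to 0$: the obstruction and comparison groups are the $H^1(\Gamma_0,-)$, annihilated by $|\Gamma_0|=N'$, whereas the competing quotients $\coker(M^{\Gamma_0}\to (M/pM)^{\Gamma_0})$ and $(pM)^{\Gamma_0}/pM^{\Gamma_0}$ are annihilated by $p$; as $\gcd(N',p)=1$ they vanish, yielding surjectivity and the identification of the kernel. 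This isomorphism is $G$-equivariant, so taking $G$-invariants and feeding in Corollary \ref{globalinvunitsomega} gives
\[
	K^\times/K^{\times p} \xrightarrow{\ \sim\ } (V^{\Gamma_0})^G = V^{G\times\Gamma_0}.
\]

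It remains to upgrade $V^{G\times\Gamma_0}$ to $V^{G}$, that is, to prove that every $G$-invariant class is automatically $\Gamma_0$-invariant; this is the main obstacle. Equivalently, decomposing $V=\bigoplus_{\psi\in\widehat{\Gamma_0}}V_\psi$ into isotypic components (legitimate since $|\Gamma_0|$ is prime to $p$) and using that $G$ preserves each $V_\psi$, I must show $V_\psi^G=0$ for $\psi\neq 1$. Here the explicit Kummer description $\OO(\Sigma^1)=\bigoplus_{i=0}^{N'-1}\OO(\Omega)z^i$ with $z^{N'}=\tau u\in\OO(\Omega)^\times$ is the natural tool: $\Gamma_0$ scales $z$ by $\mu_{N'}$ and $G$ acts through a cocycle $c\colon G\to \OO(\Omega)^\times$ with $g(z)=c(g)z$. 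The arithmetic leverage is that the eigenvalues $\chi(\gamma)\in\mu_{N'}$ are $p$-th powers in $K$, whence $\gamma[z^i]=[\chi(\gamma)^i z^i]=[z^i]$; thus $[z]=(N')^{-1}[\tau u]$ and all monomial classes lie in $V^{\Gamma_0}$, so any nontrivial isotypic contribution must come from genuinely mixed units.

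The crux is therefore to rule out $G$-invariant mixed units modulo $p$-th powers. The route I would pursue is to establish a sufficiently explicit $G\times\Gamma_0$-equivariant description of $\OO(\Sigma^1)^\times/K^\times$ — an analogue of Junger's isomorphism $\OO(\Omega^d)^\times/K^\times \cong \bZ[[\sH]]^0$ of Proposition \ref{globunitsomega} — and then to run the transitivity argument of Lemma \ref{nomodpGinvariants}: a $G$-invariant class, evaluated at each finite level $\sH_m$ on which $G$ acts transitively (Lemma \ref{actioncor}), is forced to be ``constant'', and such a constant cannot carry a nontrivial $\Gamma_0$-weight $\psi\neq 1$. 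Closing this step — equivalently, showing that modulo $p$-th powers every unit of $\Sigma^1$ is a power of $z$ times a pullback from $\Omega$, so that $V=V^{\Gamma_0}$ — is where the real work lies; once it is in hand, $V^G = V^{G\times\Gamma_0}\cong K^\times/K^{\times p}$, and the composite with the inclusion of constants is exactly the asserted isomorphism.
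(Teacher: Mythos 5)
Your proposal contains a genuine gap, and it sits exactly where you say "the real work lies." The preliminary reductions are correct: the prime-to-$p$ cohomology argument showing $\OO(\Omega)^\times/\OO(\Omega)^{\times p} \xrightarrow{\sim} V^{\Gamma_0}$ is sound, as is the observation that the monomial classes $[z^i]$ are $\Gamma_0$-invariant in $V$ because $\mu_{N'} \subset K^{\times p}$. But the decisive assertion --- that modulo $p$-th powers every unit of $\Sigma^1$ is a power of $z$ times a pullback from $\Omega$, equivalently $V = V^{\Gamma_0}$ --- is not a formal consequence of anything you have set up, and it cannot be produced by "running the transitivity argument of Lemma \ref{nomodpGinvariants}" on $\Sigma^1$. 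That argument requires as input an explicit $G$-equivariant description of the unit group (for $\Omega$ this is Proposition \ref{globunitsomega}, a theorem of Junger), and for $\Sigma^1$ no such description exists without substantial new input: establishing the "analogue of Junger's isomorphism" for $\Sigma^1$ is a hard computation about the covering itself, not something the group actions force. So the proof as written establishes $K^\times/K^{\times p} \cong (V^{\Gamma_0})^G$ but never reaches $V^G$.

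The paper closes precisely this hole by citation: by \cite[Thm. 5.1]{JUNEQN} (Junger's computation of the units of the first Drinfeld covering) there is a short exact sequence
\[
	0 \rightarrow \OO(\Omega_{K(\sigma)})^\times \rightarrow \OO(\Sigma^1_{K(\sigma)})^\times \rightarrow \bZ / (q+1) \bZ \rightarrow 0,
\]
where $\sigma$ is a primitive $N$th root of $\pi$ and the quotient is generated by the Kummer generator. Taking $\Gal(K(\sigma)/K(\varpi))$-invariants and tensoring with $\bZ/p\bZ$ kills the prime-to-$p$ quotient and yields directly $\OO(\Omega^d)^\times/\OO(\Omega^d)^{\times p} \xrightarrow{\sim} \OO(\Sigma^1)^\times/\OO(\Sigma^1)^{\times p}$, after which Corollary \ref{globalinvunitsomega} finishes the proof. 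Note that once this structure theorem is in hand, your $\Gamma_0$-cohomology step and the isotypic decomposition become unnecessary: the cited sequence already gives the stronger statement that every unit class mod $p$-th powers is pulled back from $\Omega$, which subsumes both $V = V^{\Gamma_0}$ and the identification of $V^{\Gamma_0}$. So the honest summary is that your proposal rederives, by longer means, the easy part of the paper's proof, and leaves open exactly the part that the paper resolves by invoking Junger's theorem.
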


\begin{proof}
	Let $\sigma$ be a primitive $\supth{N}$ root of $\pi$. Then by \cite[Theorem~5.1]{JUNEQN}, there is a short exact sequence of abelian groups
	\[
		0 \longrightarrow \OO\left(\Omega^d_{K(\sigma)}\right)^\times \longrightarrow \OO\left(\Sigma^1_{K(\sigma)}\right)^\times \longrightarrow \bZ / (q+1) \bZ \longrightarrow 0.
	\]
	Taking $\Gal(K(\sigma) / K(\varpi))$-invariants and applying $- \otimes \bZ / p \bZ$, we are left with an isomorphism
	\[
		\OO\left(\Omega^d\right)^\times / \OO\left(\Omega^d\right)^{ \times p} \stackrel{\lowsim}{\longrightarrow} \OO\left(\Sigma^1\right)^\times / \OO\left(\Sigma^1\right)^{ \times p}.
	\]
	The result then follows from Corollary~\ref{globalinvunitsomega}.
\end{proof}

We now want to show that the homomorphism
\[
\widehat{H} \longrightarrow \Pic(\Sigma^1)[p]
\]
associated to the Galois covering $f \colon \Sigma^2 \rightarrow \Sigma^1$ is injective. In order to prove this, we will make use of the following explicit description of the Kummer exact sequence.

Recall that if $X$ is a rigid space over $K$, then for any $n \geq 1$, the Kummer exact sequence is the short exact sequence
\[
	0 \longrightarrow \OO(X)^\times / \OO(X)^{\times n} \longrightarrow \HH^1_{\et}(X, \mu_n) \longrightarrow \Pic(X)[n] \longrightarrow 0 
\]
arising from the long exact sequence of the functor $\Gamma(X_{\et}, -)$ applied to the sequence
\[
	0 \longrightarrow \mu_n \longrightarrow \bG_m \xrightarrow{\times n} \bG_m \longrightarrow 0 
\]
of sheaves of $X_{\et}$, which is exact because $n$ is invertible in $K$; see \cite[Section~3.2]{PJ}. There is a more explicit description of this sequence, which we now summarise. References in the case of schemes are \cite[Tag 03PK]{STACK} and \cite[Section~III.4]{MIL}, from which the case for rigid spaces can be deduced \textit{mutatis mutandis}.

Let $\{(\sL, \alpha)\} / \! \cong$ be the set of pairs $(\sL, \alpha)$, where $\sL \in \Pic(X)$ and $\alpha \colon \sL^{\otimes n} \xrightarrow{\lowsim} \OO_X$ is an $\OO_X$-linear isomorphism, considered up to the natural notion of isomorphism. The set $\{(\sL, \alpha)\} /\! \cong$ forms an abelian group, and there is an isomorphism of short exact sequences 
\[\begin{tikzcd}
	0 & {\OO(X)^\times / \OO(X)^{\times n}} & {\{(\sL, \alpha)\} / \! \cong} & {\Pic(X)[n]} & 0 \\
	0 & {\OO(X)^\times / \OO(X)^{\times n}} & {\HH^1_{\et}(X, \mu_n)} & {\Pic(X)[n]} & 0
	\arrow[from=2-1, to=2-2]
	\arrow[from=2-2, to=2-3]
	\arrow[from=2-3, to=2-4]
	\arrow[from=2-4, to=2-5]
	\arrow[from=1-4, to=1-5]
	\arrow[from=1-3, to=1-4]
	\arrow[from=1-2, to=1-3]
	\arrow[from=1-1, to=1-2]
	\arrow["{=}"', from=1-2, to=2-2]
	\arrow["\sim"', from=1-3, to=2-3]
	\arrow["{=}"', from=1-4, to=2-4]
\end{tikzcd}\]
The homomorphism $\{(\sL, \alpha)\} /\!  \cong \: \rightarrow \Pic(X)[n]$ is simply $[(\sL, \alpha)] \mapsto [\sL]$.
Given a pair $[(\sL, \alpha)]$,  the associated $\mu_n$-torsor in $\HH^1_{\et}(X, \mu_n)$ is $Z \coloneqq \underline{\Sp}(\sA)$, where $\sA$ is the coherent sheaf of $\OO_X$-algebras
\[
	\sA = \bigoplus_{i = 0}^{n-1} \sL^{\otimes i},
\]
with multiplication the natural maps
\[
\begin{array}{ll}
	\sL^{\otimes i} \otimes \sL^{\otimes j} \longrightarrow \sL^{i + j} & \quad \mbox{if } i + j \leq n-1, \\
	\sL^{\otimes i} \otimes \sL^{\otimes j} \longrightarrow \sL^{i + j} \stackrel{\alpha}{\longrightarrow} \sL^{i + j - n} & \quad \mbox{if } i+j \geq n
\end{array}
\]
for $0 \leq i,j \leq n$. In order to describe the structure of $Z$ as a $\mu_n$-torsor, we first consider this construction locally.

Suppose that $\sL = \OO_X$. In this case the isomorphism $\alpha$ has the form $\alpha \colon \OO_X^{\otimes n} \rightarrow \OO_X$, and we can use the canonical isomorphism $\psi \colon \OO_X \rightarrow \OO_X^{\otimes n}$ to define $a \coloneqq \alpha(\psi(1)) \in \OO_X(X)^\times$. Then under the construction above,
\[
Z = \underline{\Sp}(\OO_X[z] / (z^n - a)).
\]
For any rigid space $Y$ over $X$, $Z(Y) = \{s \in \OO_Y(Y) \mid s^n = a\}$, which has the structure of a $\mu_n$-torsor via
\[
	\mu_n(Y) \times Z(Y) \longrightarrow Z(Y), \quad (\zeta, s) \longmapsto \zeta s.
\]
Now for a general pair $[(\sL, \alpha)]$, the associated space $Z$ is locally in the rigid topology of the above form, and these structures patch to give $Z$ the structure of a $\mu_n$-torsor.

Now suppose that $K$ contains a primitive $\supth{n}$ root of $1$. In this case the group scheme $\mu_n$ is naturally identified with the constant group scheme $\underline{\mu_n(K)}$, and under this identification there is a correspondence between $\mu_n$-torsors and Galois coverings $Z \rightarrow X$ with Galois group $\underline{\mu_n(K)}$ (to use the language of Section~\ref{section1}).

We are interested in the homomorphism $\HH^1_{\et}(X,\mu_n) \rightarrow \Pic(X)[n]$. From the description of the $\mu_n$-action above, we see that if a Galois covering $f \colon Z \rightarrow X$ corresponds to the pair $[(\sL, \alpha)]$, we can recover $\sL$ as the line bundle
\[
	\sL \cong e_{\iota} \cdot f_*\OO_Z,
\]
where $\iota$ is the natural inclusion $\iota \colon \mu_n(K) \rightarrow K^\times$. More generally, if $f \colon Z \rightarrow X$ is a Galois covering with Galois group $\Gamma$ and $\chi \colon \Gamma \xrightarrow{\lowsim} \mu_n(K)$ is an isomorphism, then in the induced exact sequence
\[
	0 \longrightarrow \OO(X)^\times / \OO(X)^{\times n} \longrightarrow \HH^1_{\et}(X, \underline{\Gamma}) \longrightarrow \Pic(X)[n] \longrightarrow 0,
\]
the image of the Galois covering $f \colon Z \rightarrow X$ in $\Pic(X)[n]$ is the line bundle $e_{\chi} \cdot f_* \OO_{Z}$.

\begin{thm}\label{mainthm}
Suppose that $K$ contains $L(\varpi)$ and a primitive $\supth{p}$ root of\, $1$. Then the homomorphism
\[
	\widehat{H} \longrightarrow \Pic(\Sigma^1)[p]^G, \quad \chi \longmapsto \sL_{\chi} = e_{\chi} \cdot f_*\OO_{\Sigma^2}
\]	
is injective.
\end{thm}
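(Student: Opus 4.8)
The plan is to show that the kernel of the homomorphism is trivial, i.e.\ that $\sL_\chi$ is non-trivial for every $\chi \neq 1$. First I would record that $H \cong (\bF, +)$ is elementary abelian of exponent $p$, so that the homomorphism does take values in $\Pic(\Sigma^1)[p]$, and that $G$ acts trivially on $\widehat{H}$ (as $G$ commutes with the $H$-action), which is why the image lies in the $G$-invariants. Now fix $\chi \in \widehat{H}$ with $\sL_\chi$ trivial; I want to deduce $\chi = 1$, so suppose for contradiction that $\chi \neq 1$. Then $\ker\chi$ has index $p$ in $H$, and the corresponding intermediate covering $g \colon Z \rightarrow \Sigma^1$ (the quotient of $\Sigma^2$ by $\ker\chi$) is a degree-$p$ cyclic Galois covering with group $H / \ker\chi$. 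Since $\ker\chi$ is $G$-stable, $Z$ is a $G$-stable covering, and as $Z$ is a quotient of the geometrically connected space $\Sigma^2$ (Corollary \ref{geomconn}), it is itself geometrically connected over $K$.

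Next I would identify $\sL_\chi$ with a Kummer class. Using Proposition \ref{abeliangaloisprop} together with the coset identity $e_\chi \cdot f_* \OO_{\Sigma^2} = e_{\bar\chi} \cdot g_* \OO_Z$, where $\bar\chi \colon H/\ker\chi \xrightarrow{\sim} \mu_p(K)$ is induced by $\chi$, the explicit description of the Kummer sequence recalled just before the theorem shows that $\sL_\chi$ is precisely the image of the class $[Z] \in \HH^1_\et(\Sigma^1, \mu_p)$ under the boundary map $\HH^1_\et(\Sigma^1, \mu_p) \rightarrow \Pic(\Sigma^1)[p]$. The hypothesis that $\sL_\chi$ is trivial places $[Z]$ in the kernel of this boundary map, so by exactness $[Z]$ is the image of some $\bar u \in \OO(\Sigma^1)^\times / \OO(\Sigma^1)^{\times p}$; concretely $Z \cong \Sigma^1(u^{1/p})$. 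Because $Z$ is $G$-stable and $G$ fixes the $\mu_p$-structure (it acts trivially on $\widehat{H}$, hence fixes $\bar\chi$), the class $[Z]$ is $G$-invariant, and by the $G$-equivariance and injectivity of $\OO(\Sigma^1)^\times / \OO(\Sigma^1)^{\times p} \hookrightarrow \HH^1_\et(\Sigma^1, \mu_p)$ the class $\bar u$ is $G$-invariant as well. Proposition \ref{globalinvunitssigma} then forces $\bar u$ to come from a constant $c \in K^\times$, so that $Z \cong \Sigma^1(c^{1/p})$.

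Finally I would derive the contradiction from geometric connectedness. For any constant $c \in K^\times$ the polynomial $z^p - c$ splits into $p$ distinct linear factors over $\bC_p$ (which is algebraically closed of characteristic $0$ and contains $\mu_p$), so the base change of $\Sigma^1(c^{1/p}) \rightarrow \Sigma^1$ along $K \rightarrow \bC_p$ is $p$ disjoint copies of $\Sigma^1_{\bC_p}$. As $p \geq 2$ and $\Sigma^1_{\bC_p}$ is connected and non-empty, $\Sigma^1(c^{1/p})$ has $p \geq 2$ geometric connected components and in particular is not geometrically connected, contradicting the geometric connectedness of $Z$. Hence $\chi = 1$, and the homomorphism is injective. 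I expect the main obstacle to be the middle step: one must show that triviality of the line bundle, combined with $G$-invariance, forces the associated Kummer covering to be \emph{constant}. This is exactly where the computation of the $G$-invariant mod-$p$ units (Proposition \ref{globalinvunitssigma}, resting in turn on Corollary \ref{globalinvunitsomega} and Lemma \ref{nomodpGinvariants}) is indispensable; once the covering is known to be constant, the geometric connectedness of $\Sigma^2$ supplies the clean contradiction.
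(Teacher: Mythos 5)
Your proposal is correct and follows essentially the same route as the paper's proof: quotient $\Sigma^2$ by $\ker\chi$, identify $\sL_\chi$ with $e_{\bar\chi}\cdot g_*\OO_Z$, use the $G$-invariant Kummer sequence together with Proposition \ref{globalinvunitssigma} to force the covering to be constant, and contradict geometric connectedness. The only (immaterial) difference is that you establish geometric connectedness of the quotient $Z$ upfront as the surjective image of $\Sigma^2_{\bC_p}$, whereas the paper pulls the disconnection of $(\Sigma^2/H_\chi)_{\bC_p}$ back along the Galois surjection to contradict Corollary \ref{geomconn} directly --- the same argument read in the opposite direction.
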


\begin{remark}
	The assumption that $K$ contains $L(\varpi)$ is simply to ensure the space $\Sigma^1$ is defined, and the assumption that $K$ contains a primitive $\supth{p}$ root of $1$ is similarly to ensure that the homomorphism is defined. Furthermore, when $F$ is unramified, the assumption that $K$ contains a primitive $\supth{p}$ root of $1$ in the statement of Theorem~\ref{mainthm} is superfluous. Indeed, $K$ contains $L(\varpi)$, and the Lubin--Tate extensions $\bQ_p(\zeta_p)$ and $\bQ_p((-p)^{1 / (p-1)})$ of $\bQ_p$ are equal.
\end{remark}

\begin{proof}
	Let $\chi \colon H \rightarrow K^\times$ be non-trivial. We want to show that $e_{\chi} \cdot f_*\OO_{\Sigma^2} \in \Pic(\Sigma^1)$ is non-trivial. Because $H$ has exponent $p$ and $\chi$ is non-trivial, $\chi$ induces an isomorphism
	\[
		\chi' \colon H / H_{\chi} \stackrel{\lowsim}{\longrightarrow} \mu_p(K),
	\]
	where $H_{\chi}$ is the kernel of $\chi$. From $H_{\chi}$ we may form the quotient 
	\[
		f' \colon \Sigma^2 / H_{\chi} \longrightarrow \Sigma^1.
	\]
	If $U \subset \Sigma^1$ is an admissible open subset and $V = f^{-1}(U) \subset \Sigma^2$, then above $U$ the quotient $\Sigma^2 / H_{\chi}$ is described by $\Sp(\OO(V)^{H_{\chi}})$. Because $H_{\chi}$ is normal, $f' \colon \Sigma^2 / H_{\chi} \rightarrow \Sigma^1$ is Galois with Galois group $H / H_{\chi}$, which follows from \cite[Theorem~2.2]{CHR} and the fact that each property in the definition of a Galois extension checked affinoid locally.
	
	We first note that we have an equality of $\OO_{\Sigma^1}$-modules
	\[
		e_{\chi} \cdot f_*\OO_{\Sigma^2} = e_{\chi'} \cdot f'_* \OO_{\Sigma^2 / H_{\chi}}.
	\]
	Indeed, for any admissible open subset $U$ of $\Sigma^1$,
	\[
		(e_{\chi'} \cdot f'_* \OO_{\Sigma^2 / H_{\chi}})(U) = e_{\chi'} \cdot \OO_{\Sigma^2}(f^{-1}(U))^{H_{\chi}}
	\]
	and
	\[
		(e_{\chi} \cdot f_*\OO_{\Sigma^2})(U) = e_{\chi} \cdot \OO_{\Sigma^2}(f^{-1}(U)).
	\]
	Setting $B \coloneqq \OO_{\Sigma^2}(f^{-1}(U))$, we have that
	\begin{align*}
		e_{\chi}\cdot B &= \{b \in B \mid h(b) = \chi(h)b \text{ for all } h \in H\}, \\
		e_{\chi'} \cdot B^{H_{\chi}} &= \{b \in B^{H_{\chi}} \mid h(b) = \chi(h)b \text{ for all }h \in H / H_{\chi}\},
	\end{align*}
	and it is direct to check that these are equal. Therefore, we are reduced to showing that $e_{\chi'} \cdot f'_* \OO_{\Sigma^2 / H_{\chi}}$ is non-trivial.

	Now because the action of $G$ on $\Sigma^2$ and $\Sigma^1$ commutes with the action of $H$, $G$ acts on $\Sigma^2 / H_{\chi}$, $f' \colon \Sigma^2 / H_{\chi} \rightarrow \Sigma^1$ is $G$-equivariant, and the $G$-action commutes with the action of $H / H_{\chi}$. Therefore, the covering $f' \colon \Sigma^2 / H_{\chi} \rightarrow \Sigma^1$ defines an element of 
	$\HH^1_{\et}(\Sigma^1, \underline{H / H_{\chi}})^G$, see \cite[Section~4.1]{JUNEQN}, the middle term of the $G$-invariants of the Kummer exact sequence
	\begin{equation}\label{Ginvkummer}
		0 \longrightarrow \left( \OO(\Sigma^1)^\times / \OO(\Sigma^1)^{\times p} \right)^G \longrightarrow \HH^1_{\et}(\Sigma^1, \underline{H / H_{\chi}})^G \longrightarrow \Pic(\Sigma^1)[p]^G.
	\end{equation}
	Now suppose towards a contradiction that the line bundle $e_{\chi'} \cdot f'_* \OO_{\Sigma^2 / H_{\chi}}$ is trivial. Then from the exact sequence (\ref{Ginvkummer}) above, the space $\Sigma^2 / H_{\chi}$ is given as $\kappa(v) = \Sigma^1(v^{1/p})$ for some
	\[
		v \in \left( \OO(\Sigma^1)^\times / \OO(\Sigma^1)^{\times p} \right)^G.
	\]
	By Proposition~\ref{globalinvunitssigma}, we actually have $v \in K^\times / K^{\times p}$, and therefore the base change $\Sigma^2 / H_{\chi} \times_K K({v}^{1/p})$ is not connected. Over $K({v}^{1/p})$, the intermediate extension
	\[
	\Sigma^2 \times_K K({v}^{1/p}) \longrightarrow (\Sigma^2 / H_{\chi}) \times_K K({v}^{1/p})
	\]
	is Galois and hence surjective, and thus $\Sigma^2 \times_K K({v}^{1/p})$ is also not connected. But this gives a contradiction as $\Sigma^2$ is geometrically connected by Corollary~\ref{geomconn}.
\end{proof}

\begin{remark}
If we do not assume that $K$ contains a primitive $\supth{p}$ root of $1$, then the techniques used in the proof of Theorem~\ref{mainthm} can still be used to show that $\Pic(\Sigma^1)[p]^G \neq 0$. Indeed, if we assume that $\Pic(\Sigma^1)[p]^G = 0$, then the same argument but with $H_{\chi}$ replaced by any index $p$ subgroup $H_0$ of $H$ will still result in a contradiction.
\end{remark}

\section{Vector bundles on the Drinfeld upper half plane}

In this section we provide an elementary proof that all vector bundles on $\Omega^1$ are trivial, which extends and uses the result that all line bundles on $\Omega^1$ are trivial; see \cite[Theorem~A]{JUNCOH}. In the context of Theorem~\ref{mainthm}, this says that whilst the line bundles $\sL_{\chi}$ on $\Sigma^1$ are non-trivial whenever $\chi \neq 1$, the pushforward to $\Omega^1$ will be a trivial vector bundle (of constant rank $q+1$). 

Before we state the theorem, we will need the following notions from commutative algebra.

\begin{defn}
An integral domain $R$ is called a \emph{Pr\"{u}fer domain} if every finitely generated ideal of $R$ is invertible; it is called a \emph{B\'{e}zout domain} if every finitely generated ideal of $R$ is principal. 
\end{defn}

We provide a proof of the following result, for which we were unable to find a reference.

\begin{lemma}\label{bezoutlemma}
Suppose that $R$ is a B\'{e}zout domain. Then every finitely generated submodule of a free module is free.
\end{lemma}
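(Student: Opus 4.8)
The plan is to induct on the rank $n$ of the ambient free module $R^n$, peeling off one free rank-one summand at each stage and using the B\'{e}zout hypothesis precisely to guarantee that this is possible. Let $M \subseteq R^n$ be a finitely generated submodule. Since $R$ is a domain, $R^n$ is torsion-free, hence so is $M$; the real content of the lemma is that finite generation together with the B\'{e}zout property upgrades torsion-freeness to freeness. So I would state the induction as: every finitely generated submodule of $R^n$ is free.

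For the base case $n = 1$, a finitely generated submodule of $R$ is exactly a finitely generated ideal, which by hypothesis is principal, say $(a)$. If $a = 0$ then $M = 0$ is free, and if $a \neq 0$ then the map $R \to (a)$, $r \mapsto ra$, is an isomorphism because $R$ is a domain, so $M \cong R$. For the inductive step, assume the claim for submodules of $R^{n-1}$ and let $\pi \colon R^n \to R$ be the projection onto the last coordinate. Then $\pi(M)$ is a finitely generated ideal, hence principal, say $\pi(M) = (a)$. If $a = 0$ then $M \subseteq \ker \pi \cong R^{n-1}$ and we finish by induction. If $a \neq 0$ then $\pi(M) \cong R$ is free, so the short exact sequence
\[
0 \to M \cap \ker \pi \to M \xrightarrow{\pi} \pi(M) \to 0
\]
splits, giving $M \cong (M \cap \ker \pi) \oplus R$.

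The one place deserving care, and the only step that is not a formality, is that $M \cap \ker \pi$ must be finitely generated before the inductive hypothesis applies. This is automatic from the splitting: as a direct summand of the finitely generated module $M$ it is a quotient of $M$, hence finitely generated, and it sits inside $\ker \pi \cong R^{n-1}$, so by induction it is free. Therefore $M$ is a direct sum of a free module and $R$, hence free. I do not expect a genuine obstacle here; the B\'{e}zout hypothesis does all the work by forcing the image $\pi(M)$ to be principal (equivalently, free of rank one), which is exactly what is needed to split off a free summand and run the induction.
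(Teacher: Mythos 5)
Your proof is correct and follows essentially the same route as the paper: project onto a coordinate, use the B\'{e}zout hypothesis to make the image principal (hence free), split the resulting short exact sequence, and induct on the rank of the ambient free module. The only differences are cosmetic (projecting onto the last rather than the first coordinate, and spelling out the $a = 0$ case and the base case, which the paper leaves implicit).
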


\begin{proof}
	Suppose that $M$ is finitely generated over $R$, and $M$ is contained in a free module $P$. By choosing a basis for $P$, as $M$ is finitely generated, we have that $M \subset R^n$ for some $n \geq 1$. Let $\pi \colon R^n \rightarrow R$ be the projection to the first factor, and let $I \coloneqq \pi(M)$ and $K \coloneqq \ker(\pi \colon M \rightarrow R)$. Now $I$ is the homomorphic image of $M$ and thus finitely generated; hence $I$ is principal and thus free because $R$ is a B\'{e}zout domain. Therefore, the short exact sequence
	\[
		0 \longrightarrow K \longrightarrow M \longrightarrow I \longrightarrow 0
	\]
	splits, and $M \cong K \oplus I$. Finally, $K$ is also finitely generated, being a homomorphic image of $M$, and $K \subset R^{n-1}$, so the result follows by induction.
\end{proof}

We note that this property actually characterises B\'{e}zout domains among integral domains. Indeed, if $I$ is a finitely generated ideal of an integral domain $R$ which satisfies the above property, then $I$ is free, but also $I \subset R$, so by passing to the fraction field of $R$, we see that $I$ must have rank $1$, and thus $I$ is principal. This property is analogous to the following property of PIDs (which are exactly the Noetherian B\'{e}zout domains): a commutative ring $R$ is a PID if and only if every submodule of a free module is free.

\begin{thm}
Let $\fX$ be a smooth connected $1$-dimensional quasi-Stein rigid analytic space, with $\Pic(\fX) = 0$. Then any vector bundle on $\fX$ is of the form $\OO_{\fX}^n$ for some $n \geq 0$.
\end{thm}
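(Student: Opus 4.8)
The plan is to translate the geometric statement into a module-theoretic one over $R \coloneqq \OO(\fX)$ and then exploit Lemma \ref{bezoutlemma}, that finitely generated submodules of finitely generated free modules over a B\'{e}zout domain are free. First I would record the consequences of the quasi-Stein hypothesis: by Kiehl's Theorems A and B, every coherent sheaf on $\fX$ is generated by its global sections, the global sections functor $\sF \mapsto \sF(\fX)$ is exact on coherent sheaves with $\HH^1(\fX, \sF) = 0$, and it induces an equivalence between coherent $\OO_\fX$-modules and coadmissible $R$-modules. In particular a morphism of coherent sheaves is an isomorphism as soon as it is one on global sections, so it suffices to prove that $M \coloneqq \sE(\fX)$ is a free $R$-module: $n$ generators give a map $\OO_\fX^n \to \sE$ that is an isomorphism on global sections, hence an isomorphism. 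Since $\fX$ is smooth and connected, $R$ is an integral domain, and $M$ is torsion-free (a torsion section vanishes on a dense open, hence everywhere by connectedness).

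Next I would show that $R$ is a B\'{e}zout domain. Given a nonzero finitely generated ideal $I = (g_1, \dots, g_k)$, the $g_i$ generate a coherent ideal sheaf $\sI \subseteq \OO_\fX$. Because $\fX$ is smooth of dimension one, its local rings are regular of dimension one (discrete valuation rings), so the torsion-free coherent sheaf $\sI$ is locally free; being a nonzero ideal it has rank one, hence is invertible. Exactness of global sections applied to $\OO_\fX^k \twoheadrightarrow \sI \hookrightarrow \OO_\fX$ identifies $\sI(\fX)$ with the image of $R^k \to R$, namely $I$ itself. The hypothesis $\Pic(\fX) = 0$ forces $\sI \cong \OO_\fX$, and unwinding the inclusion $\sI(\fX) \hookrightarrow R$ shows $I = fR$ for some $f \in R$. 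Thus every finitely generated ideal of $R$ is principal.

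It remains to see that $M$ is finitely generated, for then $M$ embeds into $M \otimes_R \Frac(R) \cong \Frac(R)^n$ and, after clearing a common denominator, into $R^n$; as a finitely generated submodule of $R^n$ it is free by Lemma \ref{bezoutlemma}, and the reduction above yields $\sE \cong \OO_\fX^n$. To obtain finite generation I would show that $\sE$ is generated by finitely many global sections, producing a surjection $\OO_\fX^k \twoheadrightarrow \sE$, which on global sections gives $R^k \twoheadrightarrow M$.

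I expect this last point to be the main obstacle, since $\fX$ is noncompact and $R$ non-Noetherian, so Theorem A alone only yields generation by possibly infinitely many sections. The natural remedy is a Forster--Swan type bound adapted to the one-dimensional quasi-Stein setting: locally near each point as many sections as the rank, with independent values there, suffice by Nakayama, and a limiting argument along the affinoid exhaustion $\fX = \bigcup_n X_n$ (using the density of $\OO(X_{n+1}) \to \OO(X_n)$) should glue these into finitely many global generators, the dimension being one making the count terminate. An alternative route avoids finite generation entirely: induct on the rank by splitting off a line subbundle, using $\HH^1(\fX, (\sE'')^{\vee}) = 0$ to split the resulting extension $0 \to \OO_\fX \to \sE \to \sE'' \to 0$; there the obstacle simply migrates to producing a nowhere-vanishing global section of $\sE$, again a dimension-one genericity statement.
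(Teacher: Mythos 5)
Your outline coincides with the paper's: reduce to modules over $R = \OO(\fX)$, show $R$ is a B\'ezout domain, and finish with Lemma \ref{bezoutlemma}. Two of your three steps are sound. In particular, your direct geometric proof that $R$ is B\'ezout (spread a finitely generated ideal out to a coherent ideal sheaf, note it is invertible because the local rings of a smooth curve are discrete valuation rings, trivialise it using $\Pic(\fX) = 0$, and recover the ideal as the global sections of the ideal sheaf using exactness of $\Gamma$ on coherent sheaves) is correct, and is more self-contained than the paper, which simply cites \cite{BSX} for the Pr\"ufer property of $R$ and then invokes $\Pic(R) = 0$.

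The genuine gap is the step you flag yourself and leave as a sketch: finite generation of $M = \sE(\fX)$ over $R$ (equivalently, generation of $\sE$ by finitely many global sections; in your alternative route, existence of a nowhere-vanishing global section). This cannot be waved through. Since $\fX$ is not quasi-compact in the case of interest and $R$ is not Noetherian, Kiehl's Theorem A only gives generation by possibly infinitely many sections, and coherent sheaves on quasi-Stein curves really can have non-finitely generated global sections: take an infinite discrete closed subset $\{a_n\}_{n \geq 1}$ of $\fX$ and the coherent sheaf $\sF = \bigoplus_n \sF_n$, where $\sF_n$ is a skyscraper at $a_n$ with fibre $K^n$; then $\sF(\fX) \cong \prod_n K^n$, with $R$ acting on the $n$th factor through evaluation at $a_n$, so any generating set projects to a spanning set of the $K$-vector space $K^n$ for every $n$ and hence must be infinite. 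Thus finite generation is a special property of locally free sheaves, and proving it is a rigid-analytic Forster-type theorem --- precisely the delicate point in your "limiting argument along the affinoid exhaustion," where sections over $X_n$ need not extend to $X_{n+1}$ but are only approximable by global ones. The paper does not do this work either: it quotes \cite[Prop. 1.1.13]{BSX}, which asserts that global sections give an equivalence between vector bundles on $\fX$ and \emph{finitely generated} projective $R$-modules, so finite generation is part of the cited input (as is the Pr\"ufer property \cite[1.1.8]{BSX}). To complete your proposal you would have to either prove that equivalence or carry out the Forster-type argument; as written, the proof is incomplete at its central point.
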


\begin{proof}
  If $\fX$ is as above, the ring $R \coloneqq \OO_{\fX}(\fX)$ is an integral domain. The global sections functor defines an equivalence of categories between vector bundles on $\fX$ and finitely generated projective modules over $R$; see \cite[Proposition~1.13]{BSX}.
  In particular, $\Pic(R) = 0$, and we are reduced to showing that any finitely generated projective module over $R$ is free. The ring $R$ is a Pr\"{u}fer domain, see \cite[Corollary~1.8]{BSX},
  and because $\Pic(R) = 0$, the ring $R$ is furthermore a B\'{e}zout domain. Then we can conclude, as for such rings any finitely generated projective module is free, by Lemma~\ref{bezoutlemma}.
\end{proof}

\begin{cor}\label{cor2}
Any vector bundle on $\Omega^1$ is of the form $\OO_{\Omega^1}^n$ for some $n \geq 0$. 
\end{cor}


\end{document}